\documentclass[12pt]{amsart}
\usepackage{mathpazo}
\usepackage{hyperref}
\usepackage{cleveref}
\usepackage{bm}
\usepackage{comment}
\usepackage[margin=1in]{geometry}
\usepackage{setspace}
\usepackage{enumitem}

\usepackage{graphicx}
\usepackage{tikz-cd}

\usepackage[cmtip, all]{xy}
\usepackage{array}

\usepackage{amssymb,amsmath,latexsym,amsthm}

\DeclareMathOperator{\Aut}{Aut}
\DeclareMathOperator{\Jac}{Jac}

\DeclareMathOperator{\Sets}{Sets}
\DeclareMathOperator{\val}{val}
\DeclareMathOperator{\Stab}{Stab}

\DeclareMathOperator{\ns}{ns}

\newcommand{\field}[1]{\mathbb{#1}}

\newcommand{\Z}{\field{Z}}

\newcommand{\R}{\field{R}}

\newcommand{\beq}{\begin{displaymath}}
\newcommand{\eeq}{\end{displaymath}}
\newcommand{\beqn}{\begin{equation}}
\newcommand{\eeqn}{\end{equation}}

\newcommand{\cyc}[1]{[#1]_2}
\newcommand{\cM}{\mathcal{M}}

\newcommand{\ED}{\mathcal{E}}

\newcommand{\wG}{\mathbf{G}}

\numberwithin{equation}{section}
\newtheorem{theorem}[equation]{Theorem}
\newtheorem{lemma}[equation]{Lemma}

\newtheorem{proposition}[equation]{Proposition}

\theoremstyle{definition}

\newtheorem*{theorem*}{Theorem}

\newcommand{\setleftmargin}[1]{
	\addtolength{\textwidth}{\oddsidemargin}
	\addtolength{\textwidth}{1in}
	\addtolength{\textwidth}{-#1}
	\setlength{\oddsidemargin}{-1in}
	\addtolength{\oddsidemargin}{#1}
	\setlength{\evensidemargin}{\oddsidemargin}
}

\newcommand{\setrightmargin}[1]{
	\setlength{\textwidth}{8.5in}
	\addtolength{\textwidth}{-\oddsidemargin}
	\addtolength{\textwidth}{-1in}
	\addtolength{\textwidth}{-#1}
}

\newcommand{\setopmargin}[1]{
	\addtolength{\textheight}{\topmargin}
	\addtolength{\textheight}{1in}
	\addtolength{\textheight}{\headheight}
	\addtolength{\textheight}{\headsep}
	\addtolength{\textheight}{-#1}
	\setlength{\topmargin}{-1in}
	\addtolength{\topmargin}{-\headheight}
	\addtolength{\topmargin}{-\headsep}
	\addtolength{\topmargin}{#1}
}

\newcommand{\setbottommargin}[1]{
	\setlength{\textheight}{11in}
	\addtolength{\textheight}{-\topmargin}
	\addtolength{\textheight}{-1in}
	\addtolength{\textheight}{-\footskip}
	\addtolength{\textheight}{-#1}
}

\newcommand{\setallmargins}[1]{
	\setbottommargin{#1}
	\setopmargin{#1}
	\setrightmargin{#1}
	\setleftmargin{#1}
}

\setallmargins{1in}

\title{Tropical curves of hyperelliptic type}

\author{Daniel Corey}
\address{UW Department of Mathematics, Van Vleck Hall, 480 Lincoln Dr., Madison, WI  53706}
\email{dcorey@math.wisc.edu}

\begin{document}

\begin{abstract}
We introduce the notion of tropical curves of hyperelliptic type.  These are tropical curves whose Jacobian is isomorphic to that of a hyperelliptic tropical curve, as polarized tropical abelian varieties. We show that this property depends only on the underlying graph of a tropical curve and is preserved when passing to genus $\geq 2$ connected minors. The main result is an excluded minors characterization of tropical curves of hyperelliptic type.
\end{abstract}

\maketitle

\section{Introduction}

The classical Torelli theorem asserts that two algebraic curves are isomorphic if and only if their Jacobians are isomorphic as polarized abelian varieties.  It is well known that the tropical analogue of this theorem is not true, see \cite[Section~6.4]{MikhalkinZharkov}. 
For example, varying the lengths of a separating pair of edges while preserving their sum produce tropical curves with isomorphic Jacobians. This phenomenon is presented in~\Cref{fig:sameJac}. As a consequence, it is possible for a non-hyperelliptic tropical curve to have a Jacobian isomorphic to that of a hyperelliptic tropical curve. We say that such a tropical curve is of \textit{hyperelliptic type}. A natural problem is to classify these objects. 
 
\begin{figure}[htb]
	\begin{center}
		\includegraphics[height=20mm]{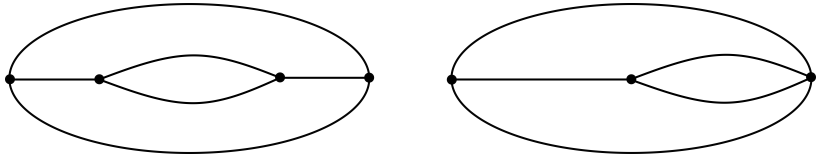}
	\end{center}
	\caption{Two tropical curves with isomorphic Jacobians.  \label{fig:sameJac}}
\end{figure}

The property of being hyperelliptic type has some interesting characteristics.  It is  independent of the edge lengths and preserved when passing to connected genus $\geq 2$ minors.   Our main theorem is a forbidden minors classification of these tropical curves. 
\begin{theorem*}
	A tropical curve $\Gamma$ is of hyperelliptic type if and only if the underlying graph of $\Gamma$ does not have  $K_4$ or $L_3$ as a minor. 
\end{theorem*}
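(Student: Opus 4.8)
The plan is to split the biconditional into the (comparatively easy) necessity of excluding $K_4$ and $L_3$ and the (hard) sufficiency, and to push everything into combinatorics at the outset. Since being of hyperelliptic type depends only on the underlying graph and is preserved under connected genus $\geq 2$ minors (both established earlier in the paper), I may replace $\Gamma$ by its underlying graph $G$ and argue purely graph-theoretically. The organizing tool is the tropical Torelli theorem: the polarized tropical Jacobian of $G$ is determined by the graphic matroid of $G$ together with the edge lengths, and two graphs have isomorphic polarized Jacobians precisely when they are cyclically (Whitney) equivalent. Thus ``$\Gamma$ is of hyperelliptic type'' becomes the matroid-theoretic statement that the graphic matroid of $G$ agrees, up to Whitney flips, with that of some hyperelliptic graph, a condition manifestly independent of the lengths.

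\textbf{Necessity.} By the contrapositive of minor-closedness it suffices to check that the two named graphs are themselves not of hyperelliptic type, since both have genus $3\geq 2$, so any $\Gamma$ having $K_4$ or $L_3$ as a minor inherits a non-hyperelliptic-type connected minor. For $K_4$ I would use that it is $3$-connected, so by Whitney's $2$-isomorphism theorem its graphic matroid has an essentially unique graphic realization; hence the only graphs sharing its polarized Jacobian are isomorphic to $K_4$ itself, and $K_4$ is not hyperelliptic (it is the genus-$3$ analogue of a smooth plane quartic). For $L_3$ I would run the analogous, more hands-on analysis of its graphic matroid and verify directly that no hyperelliptic graph matches it.

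\textbf{Sufficiency.} Assume $G$ has neither $K_4$ nor $L_3$ as a minor; the goal is to produce a hyperelliptic graph with the same polarized Jacobian. First reduce to the $2$-edge-connected case: bridges contribute nothing to $H_1$, and the polarized Jacobian is block-diagonal over the blocks of $G$, so it is enough to treat a single block, granting a lemma that hyperelliptic type is local to blocks. A $2$-connected graph with no $K_4$ minor is series-parallel (treewidth $\leq 2$, by Dirac--Duffin), hence obtained from a single edge by repeated series (subdivision) and parallel (edge-doubling) operations. I would then induct on this construction, tracking how the graphic matroid and the quadratic form evolve under each operation and showing that at every stage $G$ remains cyclically equivalent to a ``tree of bananas'', which are the canonical hyperelliptic models; the exclusion of $L_3$ is exactly what forbids the configurations in which a genus-contributing piece fails to be a banana, keeping the inductive hypothesis intact.

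The main obstacle is the sufficiency direction, and within it two linked difficulties: obtaining a structural description of $\{K_4,L_3\}$-minor-free graphs clean enough to expose a hyperelliptic model, and carrying the polarized-Jacobian isomorphism (not merely the unpolarized matroid equivalence) through the series-parallel induction for all edge lengths simultaneously. Pinning down the precise combinatorial meaning of $L_3$-freeness---translating it into the assertion that every $3$-edge-connected piece arising in the Jacobian decomposition is a banana---is where I expect the real work to lie.
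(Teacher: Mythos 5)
Your sufficiency plan rests on a false inductive invariant, and that is where the proposal breaks. The tropical Torelli theorem does not say that isomorphic polarized Jacobians force the underlying graphs to be Whitney equivalent; it says that their \emph{3-edge connectivizations} are 2-isomorphic (\Cref{prop:torelliHET}), and correspondingly the ``canonical hyperelliptic models'' are not trees of bananas. Concretely, let $H$ be the graph on vertices $v_1,v_2,v_3,w$ with edges $v_1v_2$, $v_2v_3$, $v_2w$, two parallel edges $v_1w$, and two parallel edges $v_3w$. Then $H$ is the 3-edge connectivization of the stable hyperelliptic graph obtained by doubling the path $v_1v_2v_3$ (rails $v_iv_{i+1}$ and $w_iw_{i+1}$ swapped by the involution, one flipped rung over $v_2$, two flipped rungs over each of $v_1$ and $v_3$), so $H$ is of hyperelliptic type and has no $K_4$ or $L_3$ minor. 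But $H$ is 2-connected and loopless, so its cycle matroid is connected of rank $3$, whereas the cycle matroid of any tree of bananas, and of its 3-edge connectivization, is a direct sum of pieces of rank at most $1$ (copies of $U_{1,n}$ and loops). Since 2-isomorphism and C1-moves preserve the cycle matroid of the 3-edge connectivization, and $H$ is already 3-edge connected, $H$ is not cyclically equivalent to any tree of bananas: your invariant fails exactly on the graphs the theorem is about. Even if you replace ``tree of bananas'' by the correct target class (doubles of trees with flipped edges), the inductive step --- that a series or parallel extension which stays $L_3$-free preserves C1-equivalence to a hyperelliptic graph --- is the entire content of the theorem and is not supplied. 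The paper does this work by a different route: Eppstein's nested ear decompositions, where $L_3$-freeness forces the nest intervals of ears properly nested in a common ear to be totally ordered (\Cref{lem:existHTED}); such a decomposition is upgraded by C1-moves and un-contractions to a hyperelliptic adapted one (\Cref{lem:HTEDtoHED}); and from that the involution is constructed explicitly ear by ear (\Cref{lem:HEDtoSHET}).

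Your necessity argument follows the paper's outline (minor-closedness plus non-hyperelliptic-type of the two excluded graphs), but the same imprecision about Torelli leaves a gap there too. From $\Jac(\Gamma')\cong\Jac(K_4)$ and Whitney's theorem you may conclude only that the 3-edge connectivization of the stable hyperelliptic model $\Gamma'$ is isomorphic to $K_4$; you must still rule out that $\Gamma'$ contracts \emph{properly} onto $K_4$. The paper closes this with the dimension count $d$ of \Cref{eqn:dG}: $d\geq 0$ on stable graphs, each contraction raises $d$ by one, and $d(K_4)=0$, so $\Gamma'=K_4$ (with an ad hoc variant for $L_3$, whose $d$ is not zero). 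Moreover, ``$K_4$ is not hyperelliptic'' needs a proof rather than the plane-quartic analogy: the paper deduces from \Cref{prop:C1SetsHyperelliptic} that, since $K_4$ and $L_3$ have no separating pairs of edges, a hyperelliptic involution would have to flip every edge, which no involution of either graph does. Finally, the $L_3$ case is deferred entirely in your write-up, and note that $L_3$ has genus $4$, not $3$ (a harmless slip, since only genus $\geq 2$ is needed for minor-closedness).
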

\noindent Here, $K_4$ is the complete graph on $4$ vertices and $L_3$ is the ``loop of 3 loops,'' both graphs are displayed in \Cref{fig:k4l3}. We prove a slightly stronger result, see~\Cref{thm:HTK4L3}.

\begin{figure}[htb]
	\begin{center}
		\includegraphics[height=25mm]{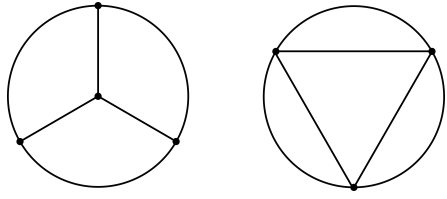}
	\end{center}
	
	\caption{The graphs $K_4$ (left) and $L_3$ (right). \label{fig:k4l3}}
\end{figure}

In~\Cref{sec:Preliminaries}, we review topics in tropical curves that we will need in this paper, including a discussion of C1-sets and 3-edge connectivization. A key tool used throughout the paper is the tropical Torelli theorem of Caporaso and Viviani \cite{CaporasoViviani}.  This will be used in \Cref{sec:HyperellipticTypeAndItsProperties} to give a description of hyperelliptic type in terms of 3-edge connectivizations. We will also show that hyperelliptic type depends only on the underlying graph, and that it is a minor closed property. Graphs that have no $K_4$ or $L_3$ minor admit a particular type of nested ear decompositions. These will be introduced in \Cref{sec:NestedEarDecompositions}, which will then be used to prove the main theorem. 

\smallskip

\noindent \textbf{Acknowledgments.} The author would like to thank Jordan Ellenberg, David Jensen, and Wanlin Li for helpful conversations, and Dmitry Zakharov for comments on an earlier draft. This research is partially supported by NSF RTG Award DMS--1502553. 

\section{Preliminaries}
\label{sec:Preliminaries}
\subsection{Tropical curves}
A \textit{weighted graph} $\mathbf{G} = (G,w)$ is a finite connected graph $G$ (possibly with loops or multiple edges) together with a function $w:V(G) \to \Z_{\geq 0}$ recording the weights of the vertices. Each edge $e\in E(G)$ is viewed as a pair of distinct half-edges. We write $e=vw$ to indicate that the endpoints of $e$ are the vertices $v$ and $w$. The \textit{valence} of a vertex $v$ is the number of half-edges incident to $v$, written $\val(v)$. In particular, a loop counts for two incidences. A vertex $v$ is  \textit{stable} if 
\begin{equation*}
2\,w(v) - 2 + \val(v) > 0,
\end{equation*}
and a weighted graph is \textit{stable} if each vertex is stable.  The \textit{genus} of $\wG$ is 
\begin{equation*}
g(\wG) = b_1(G) + |w|
\end{equation*}
where $b_1$ is the first Betti number of $G$ and $|w|$ is the sum of the weights. For an edge $e$ of $\wG$, the \textit{contraction} of $\wG$ by $e$ is the weighted graph $\wG/e$ obtained by contracting $e$ while changing the weight function in the following way.  If $e$ is a loop edge incident to $v$, then the weight of $v$ increases by 1. If $e$ is an edge between distinct vertices $v_1$ and $v_2$, then the weight of the new vertex is $w(v_1) + w(v_2)$.  Note that this preserves the genus and stability of $\wG$. If $\wG'$ is obtained from $\wG$ by a sequence of contractions, then $\wG'$ is a \textit{specialization} of $\wG$.    

A \textit{tropical curve} $\Gamma$ is a weighted graph $\wG$ together with a function $\ell: E(G) \to \R_{>0}$ recording the length of each edge. 
Every tropical curve of genus $g\geq 2$ is tropically equivalent to a unique tropical curve whose underlying weighted graph is stable (see \cite[Section~2]{Caporaso}). We refer to this as the \textit{stable model} for $\Gamma$. 

Let $\Gamma$ be a genus $g \geq 2$ tropical curve. The \textit{Jacobian} of $\Gamma$ is the real $g$ dimensional torus 
\begin{equation*}
\Jac(\Gamma) = (H_1(\Gamma,\R) \oplus \R^{|w|})/(H_1(\Gamma,\Z) \oplus \Z^{|w|}) 
\end{equation*}
together with the semi-positive quadratic form $Q_{\Gamma}$ which vanishes on $\R^{|w|}$ and on $H_1(\Gamma,\R)$ is equal to 
\begin{equation*}
Q_{\Gamma} \left( \sum_{e\in E(\Gamma)} \alpha_{e}\cdot e  \right) = \sum_{e\in E(\Gamma)} \alpha_e^2\cdot \ell(e). 
\end{equation*}
See \cite[Definition~5.1.1]{BrannettiMeloViviani} for details.

\subsection{2-isomorphism of tropical curves}  The \textit{cycle matroid} of a weighted graph $\wG$ is the cycle matroid of $G$ with the addition of $|w|$ loops. Two weighted graphs are \textit{2-isomorphic} if there is a  bijection on the edge sets that induces an isomorphism of their cycle matroids. The $2$-isomorphism class of $\wG$ is written as $[\wG]_2$. A theorem of Whitney \cite{Whitney} asserts that two (unweighted) graphs are 2-isomorphic if and only if they are connected by a sequence of vertex gluings and twists about separating pairs of vertices. In particular,  $[G]_2$ consists of only $G$ whenever $G$ is 3-vertex connected. We refer the reader to \cite[Chapter~5.3]{Oxley} for details. 

Two tropical curves $\Gamma = (\wG,\ell)$ and $\Gamma' = (\wG', \ell')$ of the same genus are \textit{2-isomorphic} if there is a length-preserving bijection on the edge sets that induces an isomorphism on the cycle matroids of $\wG$ and $\wG'$. The $2$-isomorphism class of $\Gamma$ is written as $[\Gamma]_2$.

\subsection{Connectivity and C1-sets}

Let $\wG=(G,w)$ be a weighted graph. Then $\wG$ is \textit{2-connected} if $w(v) = 0$ for all $v$ and $G$ has no cut-vertices.   Every weighted graph admits a decomposition into \textit{blocks}, i.e., subgraphs that are either a single vertex of weight $1$ or a maximal 2-connected subgraphs of $\wG$.  See \Cref{fig:blockDecomposition}  for an example of such a decomposition.

\begin{figure}[htb]
	\begin{center}
			\includegraphics[height=25mm]{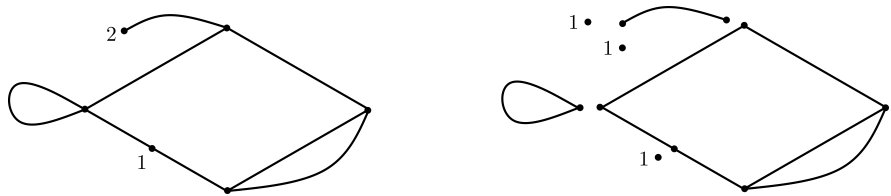}
	\end{center}
	\caption{A decomposition of a weighted graph into blocks. \label{fig:blockDecomposition}}
\end{figure}

Denote the set of nonseparating edges of $\wG$ by $E(\wG)_{\ns}$. For $e,f\in E(\wG)_{\ns}$, we say that $e\sim f$ if $e=f$ or $(e,f)$ form a separating pair of edges. This determines an equivalence relation on $E(\wG)_{\ns}$ (see \cite[Lemma~2.3.2]{CaporasoViviani}) whose equivalence classes are called \textit{C1-sets}.  Write $\Sets^1(\wG)$ for the collection of C1-sets. The C1-set that contains $f\in E(\wG)_{\ns}$ is denoted by $S_{f}$. 

The weighted graph $\wG$ is \textit{$k$-edge connected} if $G$ has at least $2$ edges, and the graph obtained by removing any $k-1$ edges from $G$ is connected. The \textit{2-edge connectivization} of $\wG$, written $\wG^2$, is obtained by contracting all separating edges of $\wG$. Consider the following operation on $\wG$.

\begin{itemize}
	\item[(C)] Given $S'\subset S$ for $S \in \Sets^1(\Gamma)$ and $e_0\in S'$, contract all edges in $S'$ except $e_0$. 
\end{itemize}

\noindent A \textit{3-edge connectivization} of $\wG$ is a weighted graph $\wG^3$ formed by forming $\wG^2$, then applying move (C) to all C1-sets of $\wG^2$. Two weighted graphs are \textit{C1-equivalent} if they belong to the same equivalence class of the equivalence relation generated by (C). By \cite[Lemma~2.3.8]{CaporasoViviani}, any two 3-edge connectivizations of a weighted graph are 2-isomorphic, and the contraction map $\wG \to \wG^3$ induces a bijection between $\Sets^1(\wG)$ and $E(\wG^3)$.  Given $S$ in $\Sets^1(\wG)$, let $e_S$ denote the edge under this correspondence. Then we have map $\psi:E(\wG)_{\ns} \to E(\wG^3)$ given by sending $f$ to $e_{S_f}$.

 The C1-sets of a tropical curve $\Gamma = (\wG,\ell)$ are the C1-sets of $\wG$ and the \textit{2-edge connectivization} $\Gamma^2$ is obtained by contracting all separating edges of $\Gamma$. A 3-edge connectivization $\Gamma^3$  of $\Gamma$ is formed in a manner similar to that of a weighted graph, but the edge lengths are modified so that $\Sets^1(\Gamma) \to E(\Gamma^3)$ is volume preserving. More precisely, consider the following operation on $\Gamma$. 

\begin{itemize}
	\item[(C')] Given $S'\subset S$ for $S$ in $\Sets^1(\Gamma)$ and $e_0\in S'$, contract all edges in $S'$ except $e_0$  and set the length of $e_0$  to $\sum_{e\in S'} \ell(e)$. 
\end{itemize}

\noindent A \textit{3-edge connectivization} of $\Gamma$ is a tropical curve $\Gamma^3 = (\wG^3,\ell^3)$ obtained first forming $\Gamma^2$ and then by applying move (C') to all C1-sets of $\Gamma^2$. Any two 3-edge connectivizations of a tropical curve are 2-isomorphic, see \cite[Remark~4.1.8]{CaporasoViviani}.  We say that $\Gamma$ and $\Gamma'$ are \textit{C1-equivalent}, written $\Gamma \sim_{C1} \Gamma'$, if $\Gamma^2$ and $\Gamma'^2$ belong to the same equivalence class of the equivalence relation generated by move (C').

\begin{proposition} 
	\label{prop:connectivizationProperties}
	Let $\Gamma = (\wG,\ell)$ be a tropical curve and $\wG'$ a weighted graph. 
	\begin{enumerate}
		\item If $\Gamma \sim_{C1} \Gamma'$  then $[\Gamma^3]_2 = [\Gamma'^3]_2$.
		\item If $\wG \sim_{C1} \wG'$, then there exists a  $\ell'$ such that $(\wG, \ell) \sim_{C1} (\wG',\ell')$.
		\item If $\epsilon: \wG'^3 \to \wG^3$ is a 2-isomorphism, then there is a bijection $\beta:\Sets^1(\wG') \to \Sets^1(\wG)$ so that $\beta(S_{e'}) = S_{\epsilon(e')}$. 
	\end{enumerate}
\end{proposition}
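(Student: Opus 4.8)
The plan is to handle the three parts separately, relying on the Caporaso--Viviani facts recorded above: that any two 3-edge connectivizations of a fixed tropical curve are 2-isomorphic, and that the contraction $\wG \to \wG^3$ induces a bijection $\phi_{\wG} \colon \Sets^1(\wG) \to E(\wG^3)$, $S \mapsto e_S$. For part (1), I would first reduce, by induction on the length of a connecting sequence of moves, to the case where $\Gamma'^2$ is obtained from $\Gamma^2$ by a single move (C') or its inverse. The key observation is that a move (C') is precisely one of the steps used to form a 3-edge connectivization: it contracts part of a single C1-set, records the summed length on the surviving edge, and neither merges nor splits C1-sets, so the C1-sets of $\Gamma^2$ and $\Gamma'^2$ are in natural correspondence. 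Hence, given any 3-edge connectivization $\Gamma'^3$ of $\Gamma'^2$, the sequence consisting of the one move (C') from $\Gamma^2$ to $\Gamma'^2$ followed by the moves producing $\Gamma'^3$ contracts every C1-set of $\Gamma^2$ to a single edge, so $\Gamma'^3$ is also a 3-edge connectivization of $\Gamma^2$. By uniqueness up to 2-isomorphism, $[\Gamma^3]_2 = [\Gamma'^3]_2$; the inverse-move case follows by symmetry.

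For part (2), I would lift the combinatorial equivalence move by move. By hypothesis $\wG$ and $\wG'$ are joined by a sequence of moves (C) and their inverses. A forward move (C), contracting $S' \setminus \{e_0\}$, lifts uniquely to the move (C') that sets $\ell(e_0) = \sum_{e \in S'} \ell(e)$. An inverse move (C), re-expanding an edge $e_0$ into the edges of $S'$, lifts to an inverse move (C'); here I am free to distribute the length, so I choose any positive lengths on the edges of $S'$ summing to $\ell(e_0)$, which is possible since $\ell(e_0) > 0$. Composing these lifts carries $(\wG, \ell)$ to some $(\wG', \ell')$ through a sequence of moves (C') and inverses, giving $(\wG, \ell) \sim_{C1} (\wG', \ell')$.

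For part (3), I would define $\beta$ directly from the canonical bijections by setting $\beta = \phi_{\wG}^{-1} \circ \epsilon \circ \phi_{\wG'} \colon \Sets^1(\wG') \to \Sets^1(\wG)$. For an edge $e' \in E(\wG'^3)$, the representative $e'$ is the edge associated to its own C1-set, so $S_{e'} = \phi_{\wG'}^{-1}(e')$, and likewise $S_{\epsilon(e')} = \phi_{\wG}^{-1}(\epsilon(e'))$. The required identity $\beta(S_{e'}) = S_{\epsilon(e')}$ is then immediate from the definition of $\beta$. Note that only the fact that $\epsilon$ is a bijection on edge sets is used here.

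I expect the only genuine content to lie in part (1), and within it the delicate point is the assertion that a single move (C') leaves the C1-set structure of the remaining edges intact, so that a partially connectivized curve still completes to a 3-edge connectivization of the original. This should follow from the matroid description of C1-sets---contracting some elements of one series class of the cycle matroid preserves the remaining classes---but it is the step I would check most carefully. Parts (2) and (3) are bookkeeping once the definitions are unwound.
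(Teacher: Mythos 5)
Your proposal is correct and takes essentially the same route as the paper: part (2) is proved by exactly the same single-move lift assigning $e_0$ the summed length, while parts (1) and (3), which the paper dismisses as ``clear,'' are filled in by you using precisely the facts the paper quotes from Caporaso--Viviani (uniqueness of 3-edge connectivizations up to 2-isomorphism, and the canonical bijection $\Sets^1(\wG) \to E(\wG^3)$). The delicate point you flag in part (1)---that a move (C') preserves the remaining C1-set structure---is indeed the only substantive verification, and your matroid (series-class) justification of it is sound.
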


\begin{proof}
	Statements (1) and (3) are clear.  For (2), it suffices to consider the case when $\wG'$ is obtained from $\wG$ by applying (C) to $(S,e_0)$ where $S$ is a C1-set and $e_0\in S$. Define $\ell'(e_0) = \sum_{e\in S} \ell(e)$, and $\ell'(e) = \ell(e)$ for $e\in E(\wG)\setminus S$. Applying move (C') to $(\wG,\ell)$ yields $(\wG',\ell')$, as required.  
\end{proof}

\subsection{Hyperelliptic tropical curves} For a more comprehensive treatment of hyperelliptic tropical curves, we refer the reader to  \cite{Chan} in the unweighted case, or \cite[Section~4.11]{AminiBakerBrugalleRabinoff} in the weighted case. 

Let $\Gamma = (\wG,\ell)$ be a  tropical curve. An \textit{involution} of $\Gamma$ is an involution of the underlying graph of $\wG$ that preserves the weight and length functions. Note that swapping the two half edges of a loop is a nontrivial involution. If $\tau$ exchanges the two half edges $e$, then $e$ is said to be \textit{flipped}. The quotient of $\Gamma$ by $\tau$ is the (unweighted) tropical curve $\Gamma/\tau$ whose vertices of $\Gamma/\tau$ correspond to orbits of the action of $\langle\tau\rangle \subset \Aut(\wG)$ on $V(\wG)$, and the edges of $\Gamma/\tau$ correspond to the orbits of the non-flipped edges of $\wG$ The length of $[e] \in E(\Gamma/\tau)$ is $|\Stab(e)| \cdot \ell(e)$. In particular, a flipped edge is collapsed to a vertex upon forming $\Gamma/\tau$.  

A  tropical curve of genus at least $2$ is \textit{hyperelliptic} if its stable model $\Gamma$ has an involution $\tau$ such that each vertex of positive weight is fixed, and the underlying graph of $\Gamma/\tau$ is a tree.  If  $\Gamma$ is hyperelliptic, then there exists a unique $\tau$ that fixes all separating edges pointwise. This $\tau$ is called the \textit{hyperelliptic involution} of $\Gamma$.

We end this section with a discussion of fixed points and C1-sets of a stable hyperelliptic tropical curve. Let $\Gamma$ be a stable hyperelliptic tropical curve, $\tau$ its hyperelliptic involution,  $T = \Gamma/\tau$, and $\pi:\Gamma \to T$ the quotient map. A \textit{fixed point} of $\tau$ is a vertex $v$ of some subdivision of $\Gamma$ so that $\tau(v) = v$. 

\begin{lemma}
	\label{lem:3flippedEdges}
If $a$ is a 1-valent vertex of $T$ so that the vertices in  $\pi^{-1}(a)$ have weight 0,
then $\pi^{-1}(a)$ contains at least 2 edges. In particular, $\pi^{-1}(a)$ contains at least 2 fixed points, each appearing as the midpoint of a flipped edge.  
\end{lemma}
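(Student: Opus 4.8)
The plan is to read off the structure of the fiber $\pi^{-1}(a)$ from two ingredients: stability of $\Gamma$ and the fact that $a$ is a leaf of the tree $T$. Since the vertices of $T=\Gamma/\tau$ are the $\langle\tau\rangle$-orbits on $V(\wG)$, the actual vertices of $\Gamma$ lying in $\pi^{-1}(a)$ form a single orbit, so they are either one $\tau$-fixed vertex $v$ (with $\tau(v)=v$) or a swapped pair $\{v,\tau(v)\}$. By hypothesis each of these vertices has weight $0$, so stability, $2w(v)-2+\val(v)>0$, forces $\val(v)\geq 3$. The goal is to show that the incidences at these vertices that are \emph{not} used to connect the fiber to the rest of $\Gamma$ come from at least two flipped edges, whose midpoints are the desired fixed points.

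Next I would use that $a$ is $1$-valent. There is then a unique edge $\bar e$ of $T$ incident to $a$, and since the correspondence $S^1(\wG)\leftrightarrow E(\wG^3)$ (more directly, orbits of non-flipped edges $\leftrightarrow$ edges of $T$) is a bijection, every non-flipped edge that leaves the fiber toward the rest of $\Gamma$ lies in the single orbit corresponding to $\bar e$. This controls the number of ``outgoing'' half-edges at each fiber vertex: in the swapped-pair case there is exactly one outgoing edge at $v$, its $\tau$-image being the outgoing edge at $\tau(v)$; in the fixed-vertex case the outgoing edges form a single orbit of size one or two at $v$. Every remaining half-edge at a fiber vertex must belong to an edge whose image in $T$ is the vertex $a$ itself, i.e.\ to a flipped edge collapsed onto $a$.

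The clean heart of the argument is the swapped-pair case. Here $v$ and $\tau(v)$ can be joined only by flipped edges: an edge between them that is not flipped would, together with its $\tau$-image, form a $2$-element orbit mapping to a loop of $T$, contradicting that $T$ is a tree; and a loop at $v$ would likewise pair with a loop at $\tau(v)$ to produce a loop of $T$. Subtracting the single outgoing edge from $\val(v)\geq 3$ therefore leaves at least two flipped edges joining $v$ to $\tau(v)$. These are the $\geq 2$ edges contained in $\pi^{-1}(a)$, and the $\tau$-fixed midpoint of each is a fixed point, yielding the two required fixed points, each genuinely a midpoint of a flipped edge.

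The step I expect to be the main obstacle is the single fixed-vertex case, where every flipped edge collapsing to $a$ is a \emph{loop} at $v$. Here the naive count is weaker, since a flipped loop contributes $2$ to $\val(v)$, so $\val(v)\geq 3$ by itself forces only one such loop. I would resolve this by recording that the reflection that $\tau$ induces on a flipped loop has two fixed points, namely the attaching vertex $v$ and the loop's midpoint, and by subdividing the loop at its midpoint so that it becomes a $\tau$-swapped pair of edges lying in $\pi^{-1}(a)$; this simultaneously exhibits the two edges and the two fixed points demanded by the statement. Making this loop bookkeeping precise, and reconciling it with the ``banana'' picture of the swapped-pair case (in which the two fixed points are midpoints of two \emph{distinct} flipped edges), is the delicate point of the proof.
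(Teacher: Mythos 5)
Your proposal has the same skeleton as the paper's proof: the fiber vertices form a single $\langle\tau\rangle$-orbit, hence are either one fixed vertex or a swapped pair; since $a$ is 1-valent, every edge leaving the fiber lies in the single orbit over the unique edge of $T$ at $a$; and stability forces $\val(v)\geq 3$. In the swapped-pair case your argument (no loops and no non-flipped edges between $v$ and $\tau(v)$ because either would produce a loop in the tree $T$; exactly one outgoing edge at $v$; hence at least two flipped edges joining $v$ to $\tau(v)$, whose midpoints are fixed points) is exactly the paper's argument with the justifications spelled out --- the paper compresses it to ``there are at least 2 edges between $v$ and $v'$, otherwise these vertices would be 1 or 2-valent.''

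Where you differ is the single-fixed-vertex case, and there your caution is warranted. The paper disposes of that case in one line: ``$\pi$ does not contract any edges to $a$, so the valence of $v$ is at most 2,'' contradicting stability --- i.e.\ it tacitly asserts that no flipped loop can occur at $v$, with no justification. That assertion is false in general: for the dumbbell graph (two loops joined by a bridge, all weights $0$), the hyperelliptic involution fixes both vertices, fixes the bridge pointwise, and flips both loops, so $T$ is a single edge and the fiber over either of its 1-valent, weight-0 endpoints consists of one vertex together with one flipped loop. This shows the case you flagged really occurs, and moreover that the literal statement of \Cref{lem:3flippedEdges} fails there: the fiber contains only one edge of $\Gamma$, and only one of its two fixed points is the midpoint of a flipped edge (the other is the attaching vertex itself). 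Your repair --- the reflection induced on a flipped loop fixes both the attaching vertex and the midpoint, visible as a swapped pair of edges after subdividing at the midpoint --- recovers precisely the conclusion the paper actually uses downstream (\Cref{prop:C1SetsHyperelliptic} and \Cref{lem:HET2ConComponents} only need two fixed points in such a fiber), and that is the best one can do: the stronger phrasing ``each appearing as the midpoint of a flipped edge'' cannot be rescued when flipped loops are present. So your ``delicate point'' is not a gap in your argument but a genuine gap in the paper's proof, which closes only by weakening the conclusion to the existence of two fixed points (or by counting edges after subdividing flipped loops).
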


\begin{proof}
	Suppose $\pi^{-1}(a)$ contains only one weight 0 vertex $v$. Then $\pi$ does not contract any edges to $a$, so the valence of $v$ is at most 2, contradicting the stability assumption. Therefore $\pi^{-1}(a)$ contains another vertex $v'$.  There are at least 2 edges between $v$ and $v'$, otherwise these vertices would be 1 or 2-valent. 
\end{proof}

\begin{proposition}
	\label{prop:C1SetsHyperelliptic}
	Let $S$ be a C1-set of $\Gamma$. Then either
	\begin{itemize}
		\item $S$ has one edge and $\tau$ flips it, or
		\item $S$ has two edges and $\tau$ exchanges them. 
	\end{itemize}
\end{proposition}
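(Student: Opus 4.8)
The plan is to read off the C1-sets from the branched double cover $\pi\colon \Gamma \to T$ onto the tree $T = \Gamma/\tau$, using the fixed points of $\tau$ (the fixed vertices and the midpoints of flipped edges) as a branch locus. I would begin with two preliminary observations. First, since $\tau$ preserves the graph, the weights, and the lengths, it carries non-separating edges to non-separating edges and separating pairs to separating pairs, so it permutes the C1-sets; in particular $\tau(S_e) = S_{\tau(e)}$ for every non-separating $e$. Second, I would use the standard characterization of the C1-relation: for non-separating $e,f$, the pair $\{e,f\}$ is separating if and only if $e$ and $f$ lie in exactly the same cycles (a two-element edge cut is crossed an even, hence zero or two, number of times by any cycle, so a cycle meets $e$ precisely when it meets $f$). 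Thus $S_e$ is determined by the set $C(e)$ of cycles through $e$.

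The heart of the argument is a dictionary between $C(e)$ and how edges meet the branch locus. A non-flipped non-separating edge $e$ maps to an edge $\bar e$ of $T$ with fibre the $\tau$-orbit $\{e,\tau(e)\}$; deleting $\{e,\tau(e)\}$ sends $\Gamma$ into the two components of $T\setminus \bar e$, so $\{e,\tau(e)\}$ is a separating pair and $\tau(e)\in S_e$. Moreover every cycle through $e$ must use both $e$ and $\tau(e)$ and hence crosses $\bar e$; since $T$ is a tree, such a closed walk pivots (changes sheets) at two fixed points lying in the two components of $T\setminus\bar e$. Consequently $C(e)$ is governed by the partition of the fixed points induced by deleting $\bar e$: for fixed points $b',b''$ there is a cycle crossing $\bar e$ exactly when $\bar e$ separates $b'$ and $b''$. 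For a flipped edge $e$, the image $\pi(e)$ is a single fixed point $b$, and each cycle through $e$ instead pivots at $b$ and at one further fixed point.

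Next I would establish the key rigidity statement: every vertex of $T$ of degree at most $2$ is a fixed point. A positive-weight vertex is $\tau$-fixed, so I may assume weight $0$; a weight-$0$, degree-$2$, non-fixed vertex would have fibre two weight-$0$ vertices each of valence $2$ (each of the two incident tree-edges lifts to a single edge at each preimage), contradicting stability. A degree-$1$ vertex is a fixed point by \Cref{lem:3flippedEdges}. In particular every leaf of $T$ is a fixed point, so every subtree hanging off an internal vertex of $T$ contains a fixed point.

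Finally I would assemble these facts. For non-flipped $e$ we already know $\tau(e)\in S_e$; to rule out a third edge $f$, note that if $f$ is flipped then $S_f=\{f\}$ (see below), so $f\notin S_e$, while if $f$ is non-flipped with $\bar f\neq\bar e$, then $C(f)=C(e)$ forces $\bar e$ and $\bar f$ to induce the same partition of the fixed points; this requires the region of $T$ strictly between $\bar e$ and $\bar f$ to be free of fixed points, whereas its internal path-vertices are fixed points (being of degree $2$) or carry hanging subtrees with leaf fixed points — a contradiction. Hence $\bar f=\bar e$ and $f\in\{e,\tau(e)\}$, giving the two-edge case. For flipped $e$, I would exhibit two cycles through $e$ whose only common edge is $e$ — routed through two distinct directions of $T$ at $b=\pi(e)$, or, when $b$ is a leaf, through a companion flipped edge furnished by \Cref{lem:3flippedEdges} — so that no $f\neq e$ lies in all of $C(e)$, whence $S_e=\{e\}$. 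I expect the main obstacle to be making the cycles-versus-fixed-points dictionary precise, and in particular verifying in every configuration the existence of two cycles through a flipped edge meeting only in that edge.
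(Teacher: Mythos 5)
Your proof is correct in substance but is organized around a genuinely different mechanism than the paper's. The paper argues by direct connectivity: in three cases (both edges flipped, exactly one flipped, neither flipped) it builds explicit paths in $\Gamma\setminus\{e,f\}$ between the endpoints of each edge, using fixed points as waypoints, thereby showing that $(e,f)$ is a separating pair exactly when $\tau(e)=f$. You instead reduce the C1-relation to the cycle-space characterization ($\{e,f\}$ is a $2$-edge cut iff $e$ and $f$ lie on exactly the same cycles) and then read the cycles through an edge off the branched double cover $\pi:\Gamma\to T$: cycles arise as preimages of tree paths between branch points, so $C(e)$ is controlled by the partition of the branch locus induced by $\bar e$. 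Both arguments ultimately rest on the same rigidity input --- your claim that every vertex of $T$ of valence at most $2$ carries a fixed point in its fiber is exactly the paper's appeal to stability and \Cref{lem:3flippedEdges} in its Cases 2 and 3 --- but your framework buys some economy: to exclude $f$ from $S_e$ you need only exhibit one cycle through $e$ avoiding $f$, rather than verify connectivity of all of $\Gamma\setminus\{e,f\}$, and the appearance of the pairs $\{e,\tau(e)\}$ as fibers of the cover is conceptually transparent. The paper's route is more elementary and self-contained, handling each configuration by hand without invoking the matroid characterization.

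Two points need the precision you anticipated, and both repairs are routine. First, the dictionary as stated is too strong: the preimage of a tree path between two branch points is a single cycle only when the path has no branch points in its interior (otherwise it is a chain of cycles), so you should pass to consecutive branch vertices spanning $\bar e$ --- the last branch vertex before $\bar e$ and the first one after it along the given path; the resulting cycle still crosses $\bar e$, avoids $\bar f$, and closes up at each end through a fixed vertex or a single flipped edge, which is all your partition argument uses. Second, flipped loops (every loop sits at a fixed vertex and must be flipped, since $T$ has no loops) admit only one cycle through them, so the ``two cycles meeting only in $e$'' device fails there; but a loop is never part of a separating pair, so $S_e=\{e\}$ holds trivially in that case. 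With these adjustments your outline goes through.
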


\begin{proof}
Without loss of generality, assume $\Gamma$ is 2-edge connected.  Let $e=uu'$ and $f=vv'$ be distinct edges.  It suffices to show that $(e,f)$ form a separating pair if and only if $\tau(e) = f$. 

\textit{Case 1}. Suppose $e$ and $f$ are both flipped. By symmetry, it suffices to find a path from $v$ to $v'$ avoiding $e$ and $f$. Let $w$ be any fixed point of $\tau$ not contained in $e$ or $f$; such a fixed point exists by \Cref{lem:3flippedEdges}. There are paths from $v$ and $v'$ to $w$ not passing through any flipped edge (except possibly one that contains $w$).

\textit{Case 2}. Suppose $f$ is flipped but $e$ is not. Let $w$ be a fixed point lying above a 1-valent vertex of $T$  in the component of $T \setminus \pi(e)$ containing $\pi(f)$. We may choose $w$ so that it is not in $f$.  There are paths from $v$ and $v'=\tau(v)$ to  $w$ passing through only non-flipped edges (expect possibly the one containing $w$) avoiding $e$. This produces a path connecting $v$ and $v'$.

Now we construct a path from $u$ to $u'$ in $\Gamma\setminus\{e,f\}$. In a manner similar to the previous paragraph, we can find paths from $u$ to $\tau(u)$ and $u'$ to $\tau(u')$ avoiding $e$ and $f$. Together with $\tau(e)$, this gives the requisite path. 

\textit{Case 3}. Suppose neither $e$ or $f$ are flipped. If $\tau(e) = f$, then $(e,f)$ is a separating pair of edges. Now assume that $\tau(e) \neq f$. By symmetry, it suffices to construct a path from $v$ to $v'$ avoiding $e$ and $f$. If there are paths in $T$ from $\pi(v)$ and $\pi(v')$ to a 1-valent vertex that avoid $\pi(e)$ and $\pi(f)$, then connecting $v$ and $v'$ is similar to Case 2. Otherwise, every vertex $a$ in the maximal subgraph between $\pi(e)$ and $\pi(f)$ is 2-valent. By stability, each $\pi^{-1}(a)$ contains a fixed point, which can be used to construct the desired path from $v$ to $v'$.
\end{proof}

\section{Hyperelliptic type and its properties}
\label{sec:HyperellipticTypeAndItsProperties}

A tropical curve $\Gamma$ is said to be of \textit{hyperelliptic type} if there is a hyperelliptic tropical curve $\Gamma'$ such that $(\Jac(\Gamma), Q_{\Gamma}) \cong (\Jac(\Gamma'), Q_{\Gamma'})$. Such a $\Gamma'$ is called a \textit{hyperelliptic model} of $\Gamma$. Since hyperelliptic type is preserved under tropical equivalence, we are free to assume that the underlying weighted graph of our a hyperelliptic type tropical curve is stable. As a consequence of the tropical Torelli theorem~\cite[Theorem~4.1.9]{CaporasoViviani} (and~\cite[Theorem~5.5.3]{BrannettiMeloViviani} in the vertex-weighted case), we have the following characterization of hyperelliptic type tropical curves.

\begin{proposition}
	\label{prop:torelliHET}
	A genus $g\geq 2$ tropical curve $\Gamma$ is of hyperelliptic type if and only if there is a hyperelliptic tropical curve $\Gamma'$ such that $\cyc{\Gamma^3} = \cyc{\Gamma'^3}$. 
\end{proposition}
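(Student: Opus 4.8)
The plan is to deduce this as a direct consequence of the tropical Torelli theorem, which is precisely the statement that the polarized tropical Jacobian both determines and is determined by the 2-isomorphism class of the 3-edge connectivization. Concretely, \cite[Theorem~4.1.9]{CaporasoViviani}, together with \cite[Theorem~5.5.3]{BrannettiMeloViviani} in the vertex-weighted setting, asserts that for any two genus $g \geq 2$ tropical curves $\Delta$ and $\Delta'$ one has $(\Jac(\Delta),Q_{\Delta}) \cong (\Jac(\Delta'),Q_{\Delta'})$ if and only if $\cyc{\Delta^3} = \cyc{\Delta'^3}$. The proposition then follows by applying this biconditional to a single candidate hyperelliptic curve $\Gamma'$ and quantifying over all such $\Gamma'$; the key structural observation is that, because the Torelli equivalence holds for each \emph{fixed} pair, the existential statement ``there is a hyperelliptic $\Gamma'$ with isomorphic Jacobian'' transports cleanly to ``there is a hyperelliptic $\Gamma'$ with 2-isomorphic 3-edge connectivization.''

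For the forward direction, suppose $\Gamma$ is of hyperelliptic type. By definition there is a hyperelliptic tropical curve $\Gamma'$ with $(\Jac(\Gamma),Q_{\Gamma}) \cong (\Jac(\Gamma'),Q_{\Gamma'})$; applying the Torelli biconditional to the pair $(\Gamma,\Gamma')$ immediately gives $\cyc{\Gamma^3} = \cyc{\Gamma'^3}$. Conversely, if there is a hyperelliptic $\Gamma'$ with $\cyc{\Gamma^3} = \cyc{\Gamma'^3}$, then the same biconditional yields $(\Jac(\Gamma),Q_{\Gamma}) \cong (\Jac(\Gamma'),Q_{\Gamma'})$, so $\Gamma'$ is a hyperelliptic model of $\Gamma$ and $\Gamma$ is of hyperelliptic type. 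A small point worth recording to keep the argument clean: any isomorphism $(\Jac(\Gamma),Q_{\Gamma}) \cong (\Jac(\Gamma'),Q_{\Gamma'})$ forces the underlying real tori to have the same dimension, so $g(\Gamma') = g(\Gamma) = g \geq 2$; thus in both directions $\Gamma'$ genuinely lies in the range of genera for which the notion of ``hyperelliptic'' is defined, and there is no degenerate low-genus case to worry about.

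Since the entire content is packaged inside the tropical Torelli theorem, there is essentially no independent technical obstacle beyond invoking that theorem in the precise biconditional form above. The only care required is to confirm that the cited version applies verbatim to vertex-weighted tropical curves — handled by the \cite{BrannettiMeloViviani} reference — and to check that the 3-edge connectivization appearing in the proposition may be taken to be any fixed representative: this is legitimate because, by \cite[Remark~4.1.8]{CaporasoViviani}, any two 3-edge connectivizations of a given tropical curve are 2-isomorphic, so the class $\cyc{\Gamma^3}$ is well defined and independent of the choices made in forming $\Gamma^3$.
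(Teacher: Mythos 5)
Your proposal is correct and matches the paper exactly: the paper states this proposition as an immediate consequence of the tropical Torelli theorem of Caporaso--Viviani (with the Brannetti--Melo--Viviani extension to the vertex-weighted case), which is precisely the biconditional you invoke for each fixed pair of curves. Your added remarks on genus preservation and on the well-definedness of $\cyc{\Gamma^3}$ are sound but not needed beyond what the cited theorems already guarantee.
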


\noindent We say that  $\Gamma$ is  \textit{strongly} of hyperelliptic type if there is a choice of edge lengths that make it hyperelliptic. By~\Cref{prop:HETLengthIndependent} below, being hyperelliptic type does not depend on the edge lengths. Therefore strongly hyperelliptic type tropical curves are hyperelliptic type. However, the converse is not true. Consider the tropical curves in \Cref{fig:sameJacNotSHET}. The one on the left is hyperelliptic type (a hyperelliptic model is displayed on the right), but no choice of edge lengths will make it hyperelliptic. It is not even a specialization of a hyperelliptic tropical curve.

\begin{figure}[htb]
	\begin{center}
		\includegraphics[height=30mm]{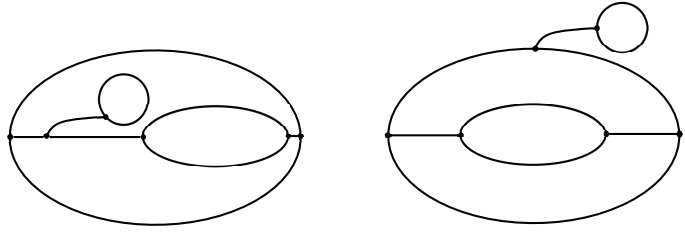}
	\end{center}
	\caption{On the left is a hyperelliptic tropical curve that is not strongly hyperelliptic type. On the right is a hyperelliptic model for this tropical curve. \label{fig:sameJacNotSHET}}
\end{figure}

Now we show that the property of being hyperelliptic type does not depend on the length function.

\begin{proposition}
	\label{prop:HETLengthIndependent}
	Suppose $\Gamma_1 = (\wG, \ell_1)$ is of hyperelliptic type,  and let $\Gamma_2 = (\wG, \ell_2)$ be a tropical curve with the same underlying weighted graph. Then $\Gamma_2$ is also of hyperelliptic type. 
\end{proposition}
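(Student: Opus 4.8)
The plan is to reduce to the level of 3-edge connectivizations via \Cref{prop:torelliHET}, and then to produce a hyperelliptic model for $\Gamma_2$ by transporting the one for $\Gamma_1$ and merely redistributing edge lengths within C1-sets. The starting observation is that $\Gamma_1$ and $\Gamma_2$ share the underlying weighted graph $\wG$, so they have the same C1-sets and we may take their 3-edge connectivizations to have a common underlying weighted graph $\wG^3$. Thus $\Gamma_1^3$ and $\Gamma_2^3$ differ only in edge lengths: for each $S\in\Sets^1(\wG)$ with corresponding edge $e_S\in E(\wG^3)$, the length of $e_S$ in $\Gamma_i^3$ is $\sum_{f\in S}\ell_i(f)$.

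First I would apply \Cref{prop:torelliHET} to the hyperelliptic-type curve $\Gamma_1$, obtaining a hyperelliptic tropical curve $\Gamma_1'$ with underlying weighted graph $\wG'$ and hyperelliptic involution $\tau'$ such that $\cyc{\Gamma_1^3}=\cyc{\Gamma_1'^3}$. Fixing a length-preserving 2-isomorphism $\phi\colon \Gamma_1^3\to\Gamma_1'^3$ and combining it with the canonical bijections $\Sets^1(\wG)\leftrightarrow E(\wG^3)$ and $\Sets^1(\wG')\leftrightarrow E(\wG'^3)$ yields (as in \Cref{prop:connectivizationProperties}(3)) a bijection $S\mapsto S'$ between the C1-sets of $\wG$ and those of $\wG'$ under which $\sum_{f\in S}\ell_1(f)=\sum_{f'\in S'}\ell_1'(f')$.

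Next I would build the hyperelliptic model $\Gamma_2'$ of $\Gamma_2$ on the same weighted graph $\wG'$ with the same involution $\tau'$, changing only the lengths. By \Cref{prop:C1SetsHyperelliptic}, each C1-set $S'$ of $\wG'$ is either a single $\tau'$-flipped edge or a pair of edges exchanged by $\tau'$. For the set $S'$ corresponding to $S$, set $L_S=\sum_{f\in S}\ell_2(f)$ and assign the flipped edge length $L_S$ in the first case, or each of the two exchanged edges length $\tfrac12 L_S$ in the second case. These lengths are $\tau'$-invariant by construction, so $\tau'$ is still a length- and weight-preserving involution of $\Gamma_2'$ with tree quotient; since stability and the tree quotient are purely combinatorial, $\Gamma_2'$ is again a stable hyperelliptic tropical curve of the same genus $g\geq 2$.

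Finally I would verify the Torelli condition. Since $\Gamma_2'$ has underlying weighted graph $\wG'$, its C1-sets and a representative $\wG'^3$ are unchanged, and by construction the length of $\phi(e_S)$ in $\Gamma_2'^3$ equals $\sum_{f'\in S'}\ell_2'(f')=L_S$, which is exactly the length of $e_S$ in $\Gamma_2^3$. Hence the same edge bijection $\phi$ is now a length-preserving 2-isomorphism $\Gamma_2^3\to\Gamma_2'^3$, giving $\cyc{\Gamma_2^3}=\cyc{\Gamma_2'^3}$, so $\Gamma_2$ is of hyperelliptic type by \Cref{prop:torelliHET}. The main obstacle is the length redistribution in the third step: one must split each prescribed total length $L_S$ across $S'$ in a way compatible with $\tau'$, and this is possible precisely because \Cref{prop:C1SetsHyperelliptic} identifies every C1-set of a hyperelliptic curve as a flipped edge or a swapped pair. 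Care is also needed to confirm that replacing $\Gamma_1'$ by $\Gamma_2'$ alters only lengths, leaving the C1-sets, stability, and tree quotient intact.
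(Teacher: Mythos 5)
Your proof is correct and follows essentially the same route as the paper: both transport the hyperelliptic model $\Gamma_1'$ of $\Gamma_1$ across the C1-set bijection of \Cref{prop:connectivizationProperties}(3), use \Cref{prop:C1SetsHyperelliptic} to know each C1-set of $\wG'$ is a flipped edge or a swapped pair, and redistribute the $\ell_2$-totals equally so the resulting $\Gamma_2'$ is hyperelliptic with $\cyc{\Gamma_2^3}=\cyc{\Gamma_2'^3}$, invoking \Cref{prop:torelliHET}. The only cosmetic difference is that the paper first reduces to the 2-edge connected case, which your argument sidesteps since separating edges are contracted away in the 3-edge connectivization anyway.
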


\begin{proof}
	Without loss of generality, we may assume that $\wG$ is 2-edge connected. Suppose $\Gamma_1' = (\wG', \ell_1')$ is a stable hyperelliptic model for $\Gamma_1$.  Let $\beta:\Sets^1(\wG') \to \Sets^1(\wG)$ be the bijection of C1-sets as in~\Cref{prop:connectivizationProperties}(3). Define $\ell_2'$ on $\wG$ in the following way.  Given a C1-set $S$ of $\wG'$, 
	\begin{itemize}
		\item if $S=\{e_0\}$ set $\ell_2'(e_0) = \sum_{e\in\beta(S)} \ell_2(e)$, or
		\item if $S=\{e_0, f_0\}$ set $\ell_2'(e_0) = \ell_2'(f_0) = \frac{1}{2} \sum_{e\in\beta(S)} \ell_2(e)$.
	\end{itemize}
	Then $\Gamma_2' = (\wG',\ell_2')$ is hyperelliptic (see \Cref{prop:C1SetsHyperelliptic}) and  $\cyc{\Gamma_2^3} = \cyc{\Gamma_2'^3}$, so $\Gamma_2$ is of hyperelliptic type. 
\end{proof}

With this Proposition in mind, we say that a weighted graph $\wG$ is of \textit{hyperelliptic type} if $(\wG,\ell)$ is hyperelliptic type for some (and therefore, any) length function $\ell$. Similarly, we say that $\wG$ is \textit{strongly} of hyperelliptic type if $(\wG,\ell)$ is hyperelliptic for some $\ell$. 

\begin{proposition}
	\label{prop:SHETC1equiv}
If $\Gamma = (\wG,\ell)$ is strongly of hyperelliptic type, then it is C1-equivalent to a hyperelliptic tropical curve.
\end{proposition}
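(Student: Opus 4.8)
The plan is to exploit an asymmetry between the two notions in play: being hyperelliptic pins down the length on each C1-set quite rigidly (via the involution), whereas C1-equivalence only remembers the \emph{total} length of each C1-set. So after fixing an auxiliary length function witnessing strong hyperelliptic type, I will symmetrize the given lengths within each C1-set, changing no C1-set total, to land on an honestly hyperelliptic curve in the same C1-equivalence class.

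First I would reduce to the case that the stable model $\wG$ is $2$-edge connected, exactly as in the proof of \Cref{prop:HETLengthIndependent}: both C1-equivalence and the hyperelliptic condition are insensitive to separating edges, since the former is defined on $2$-edge connectivizations and the hyperelliptic involution fixes separating edges pointwise. Now let $\ell'$ be a length function for which $(\wG,\ell')$ is hyperelliptic, and let $\tau$ be its hyperelliptic involution. Since every edge of $\wG$ is now nonseparating and lies in a unique C1-set, \Cref{prop:C1SetsHyperelliptic} tells me that each C1-set $S$ is either a single edge flipped by $\tau$ or a pair $\{e_0,f_0\}$ exchanged by $\tau$.

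Next I define a length function $\ell''$ on $\wG$ by leaving singleton C1-sets alone, $\ell''(e_0)=\ell(e_0)$, and symmetrizing exchanged pairs, $\ell''(e_0)=\ell''(f_0)=\tfrac12(\ell(e_0)+\ell(f_0))$. By construction $\ell''$ is $\tau$-invariant on every edge, so $\tau$ remains an involution preserving the weights and the lengths $\ell''$; as the quotient graph $\wG/\tau$ and the fixed positive-weight vertices depend only on the combinatorial pair $(\wG,\tau)$ and not on the metric, $(\wG,\ell'')$ is again hyperelliptic (of the same genus $g\geq 2$). This is the hyperelliptic curve I claim is C1-equivalent to $\Gamma$.

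It remains to check $(\wG,\ell)\sim_{C1}(\wG,\ell'')$. The two curves have the same underlying graph $\wG$, hence the same C1-sets and $2$-isomorphism type, and by construction $\ell$ and $\ell''$ give every C1-set the same total length. Concretely, applying move (C') to each exchanged pair $\{e_0,f_0\}$ — contracting the same edge $f_0$ and assigning $e_0$ the sum of the two lengths — sends \emph{both} $(\wG,\ell)$ and $(\wG,\ell'')$ to one and the same tropical curve, because the resulting length on $e_0$ is the common C1-set total in either case while singleton C1-sets are untouched. Being joined to a common curve by move (C'), the two curves lie in the same class of the equivalence relation it generates, which is exactly $\Gamma\sim_{C1}(\wG,\ell'')$. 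The point requiring the most care is the verification that $\ell''$ still yields a hyperelliptic structure: this is where I lean on \Cref{prop:C1SetsHyperelliptic} to guarantee that the exchanged pairs are precisely the C1-sets where $\ell$ may fail to be $\tau$-invariant, so that symmetrizing within C1-sets suffices and the metric redistribution never disturbs the tree quotient.
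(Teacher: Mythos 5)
Your proposal is correct and follows essentially the same route as the paper: both arguments invoke \Cref{prop:C1SetsHyperelliptic} to see that each C1-set is a flipped edge or an exchanged pair, then replace the given lengths by their average over each C1-set (the paper's formula $\ell'(e) = \bigl(\sum_{f\in S_e}\ell(f)\bigr)/|S_e|$ is exactly your symmetrization), so that the same involution makes the new curve hyperelliptic while every C1-set keeps its total length, giving C1-equivalence. The only cosmetic difference is that you first pass to the $2$-edge connectivization, whereas the paper simply leaves separating edges' lengths unchanged; your explicit verification of C1-equivalence via a common image under move (C') spells out what the paper leaves implicit.
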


\begin{proof}
For each $e\in E(\Gamma)_{\ns}$, let $\ell'(e) = (\sum_{f\in S_e} \ell(f))/|S_e|$. If $e$ is a separating edge, set $\ell'(e) = \ell(e)$.  By \Cref{prop:C1SetsHyperelliptic}, $\Gamma' = (\wG,\ell')$ is a hyperelliptic tropical curve that is C1-equivalent to $\Gamma$.
\end{proof}

\noindent In \Cref{thm:HTK4L3} below, we will prove that a tropical curve of hyperelliptic type is C1-equivalent to a hyperelliptic tropical curve.

In order for the wedge sum of two hyperelliptic tropical curves to be hyperelliptic, they need to be attached at fixed points of the respective hyperelliptic involutions. As we will see in the next two Lemmas, such a wedge sum is of hyperelliptic type regardless of how we decide to glue. 
Given tropical curves $\Gamma_1 = (\wG_1,\ell_1)$, $\Gamma_2 = (\wG_2, \ell_2)$ and vertices $v_1$, $v_2$ of some subdivision of $\wG_1$, $\wG_2$ respectively, let $\Gamma_1\vee_{v_1,v_2} \Gamma_2$ denote the tropical curve obtained by gluing $\Gamma_1$ and $\Gamma_2$ at $v_1$ and $v_2$. 

\begin{lemma}
	Let $\Gamma_1$ and $\Gamma_2$ be weighted metric graphs, and $v_i, v_i'$ vertices of some subdivision of $\Gamma_i$. Set $\Gamma = \Gamma_1 \vee_{v_1, v_2} \Gamma_2$ and $\Gamma' = \Gamma_1 \vee_{v_1', v_2'} \Gamma_2$. Then $\Gamma \sim_{C1} \Gamma'$. In particular, any tropical curve $\Gamma$ is C1-equivalent to an arbitrary wedge sum of its blocks. 
\end{lemma}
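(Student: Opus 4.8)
The plan is to show that the location of the gluing point can be transported across $\Gamma_1$ and across $\Gamma_2$ by a sequence of moves~(C$'$) together with their inverses, so that $\Gamma$ and $\Gamma'$ have (C$'$)-equivalent $2$-edge connectivizations. Since forming $\Gamma^2$ contracts the separating edges of each factor independently and leaves the gluing vertex intact, we have $\Gamma^2 = \Gamma_1^2 \vee_{\bar v_1, \bar v_2} \Gamma_2^2$ and $\Gamma'^2 = \Gamma_1^2 \vee_{\bar v_1', \bar v_2'} \Gamma_2^2$, where $\bar v_i, \bar v_i'$ denote the images of the gluing points. It therefore suffices to move the attachment point of $\Gamma_2^2$ from $\bar v_1$ to $\bar v_1'$ inside $\Gamma_1^2$, and symmetrically to move the attachment point of $\Gamma_1^2$ from $\bar v_2$ to $\bar v_2'$ inside $\Gamma_2^2$.

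The key local move is the following. Suppose $\Gamma_2$ is attached at an interior point $p$ of a nonseparating edge $e = vw$, so that $e$ is subdivided into $vp$ and $pw$. Removing both $vp$ and $pw$ disconnects $\{p\}\cup\Gamma_2$ from the rest, while removing either one alone does not; hence $\{vp, pw\}$ is a separating pair and lies in a single C1-set. Applying move~(C$'$) to contract $pw$ merges $p$ with $w$, relocating the attachment to $w$ and restoring the edge $vw$ with length $\ell(vp)+\ell(pw)$; contracting $vp$ instead relocates the attachment to $v$. Reading these two moves together with their inverses shows that attaching $\Gamma_2$ at a vertex $v$ is C1-equivalent to attaching it at the opposite endpoint $w$ of any edge $e=vw$ incident to $v$; in other words, the attachment point may be pushed across any edge.

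Because $\Gamma_1^2$ is connected and has no separating edges, I would then choose a path of edges from $\bar v_1$ to $\bar v_1'$, every edge of which is nonseparating, and push the attachment point of $\Gamma_2^2$ across these edges one at a time; this moves the attachment from $\bar v_1$ to $\bar v_1'$. Carrying out the symmetric procedure inside $\Gamma_2^2$ yields $\Gamma^2 \sim \Gamma'^2$ under the equivalence relation generated by (C$'$), so $\Gamma \sim_{C1} \Gamma'$. For the final assertion, a tropical curve is obtained from its blocks by gluing along cut vertices, so repeated application of what we have just proved lets us reattach the blocks at arbitrary points; inducting on the number of blocks then shows that $\Gamma$ is C1-equivalent to any prescribed wedge sum of its blocks. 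The main point to verify carefully is the local move: that the two subdivision edges genuinely form a C1-set and that contracting one of them is precisely move~(C$'$) relocating the attachment. The remainder is bookkeeping of lengths, which are preserved because each edge crossed is expanded and then contracted back to its original length, together with an appeal to connectivity of $\Gamma_1^2$ and $\Gamma_2^2$.
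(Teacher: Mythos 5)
Your proposal is correct and is essentially the paper's own argument in expanded form: the paper reduces to the case where the two attachment points are endpoints of a single edge, subdivides that edge, and observes that both wedge sums are obtained from the midpoint-attached curve by a move (C$'$) on the separating pair formed by the two halves — exactly your local move, which you then iterate along a path. The extra steps you include (passing to $\Gamma^2 = \Gamma_1^2 \vee \Gamma_2^2$ and walking the attachment point edge by edge) are the reductions the paper leaves implicit in ``it suffices to consider the case.''
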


\begin{proof}
	It suffices to consider the case where $v_2=v_2'$ and $v_1$ is connected to $v_1'$ by an edge $e$. Subdivide $e$ into 2 edges, and let $w$ be the new vertex. Then $\Gamma$ and $\Gamma'$ are C1-equivalent to $\Gamma_1\vee_{w,v_2}\Gamma_2$. 
\end{proof}

\begin{lemma} 
	\label{lem:HET2ConComponents}
	Suppose that each block of $\Gamma^2$ of genus $\geq 2$ is C1-equivalent to a strongly hyperelliptic type tropical curve. Then $\Gamma$ is C1-equivalent to a hyperelliptic tropical curve. In particular $\Gamma$ is of hyperelliptic type.  
\end{lemma}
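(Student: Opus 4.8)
The plan is to reduce everything to the block decomposition of $\Gamma^2$ and then reassemble the blocks into a single hyperelliptic curve, using the freedom to reglue that the previous lemma provides. Since $\Gamma \sim_{C1} \Gamma^2$ and $\sim_{C1}$ is transitive, it suffices to produce a hyperelliptic tropical curve that is C1-equivalent to $\Gamma^2$; the conclusion that $\Gamma$ is of hyperelliptic type then follows from \Cref{prop:torelliHET}. By the previous lemma, $\Gamma^2$ is C1-equivalent to an arbitrary wedge sum of its blocks $B_1,\dots,B_k$, and, crucially, that same lemma lets me choose the gluing vertices freely without leaving the C1-equivalence class. I would exploit this freedom at the very end to glue along fixed points.

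First I would replace each block by a hyperelliptic one. For a block $B_i$ of genus $\geq 2$, the hypothesis gives $B_i \sim_{C1} C_i$ with $C_i$ strongly of hyperelliptic type, and \Cref{prop:SHETC1equiv} gives $C_i \sim_{C1} H_i$ with $H_i$ hyperelliptic; transitivity yields $B_i \sim_{C1} H_i$. Since $\Gamma^2$ is bridgeless and its blocks are $2$-edge connected and joined only at cut vertices, any separating pair of nonseparating edges lies inside a single block, so every C1-set of the wedge that meets $B_i$ is a C1-set of $B_i$; hence a C1-move internal to $B_i$ is a C1-move of the whole wedge, and these block replacements assemble into a single C1-equivalence of the entire wedge. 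For the remaining blocks I would again use that $\Gamma^2$ is bridgeless: the only blocks of genus $\leq 1$ are weight-$1$ vertices and cycles (including single loops and parallel pairs). Each carries an involution with tree quotient and at least one fixed point, namely the vertex itself in the weight-$1$ case and a midpoint of a flipped edge or a fixed point as in \Cref{lem:3flippedEdges} for a cycle, so each may be treated as hyperelliptic for the purpose of gluing.

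Finally I would assemble. Using the previous lemma, I glue the hyperelliptic pieces $H_i$ and the low-genus blocks to one another along fixed points of their respective involutions. The union $\tau$ of the block involutions is then a well-defined involution of the wedge fixing every gluing vertex; its quotient is the wedge of the individual tree quotients glued at the images of these fixed points, which is again a tree, and every positive-weight vertex lies in some block and is fixed by that block's involution. Hence the wedge is hyperelliptic and C1-equivalent to $\Gamma^2$, completing the proof. The main obstacle is precisely this last verification: checking that gluing hyperelliptic pieces at fixed points produces a genuine hyperelliptic involution with tree quotient and fixes all positive-weight vertices. This is exactly where the gluing-freedom of the previous lemma is indispensable, since the original cut-vertex gluings of $\Gamma^2$ need not take place at fixed points of the involutions.
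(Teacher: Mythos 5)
Your proof is correct and takes essentially the same route as the paper: reduce to $\Gamma^2$, replace each genus $\geq 2$ block by a hyperelliptic curve via the hypothesis together with \Cref{prop:SHETC1equiv} and \Cref{lem:3flippedEdges}, and reassemble the blocks at fixed points using the gluing freedom of the wedge-sum lemma, checking that the glued involution has tree quotient. The only cosmetic differences are that the paper replaces each genus-one block by a single loop of the same total length (a move (C') on its unique C1-set) instead of keeping the cycle with a reflection involution, and that it leaves implicit the localization of C1-sets to blocks which you spell out.
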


\begin{proof}
	Let $\{\Gamma_i\}_{i=1}^k$ be the blocks of $\Gamma^2$, and let $G_i$ be the underlying (unweighted) graph of $\Gamma_i$. If $g(G_i) \geq 2$, let $\Gamma_i'$ be a hyperelliptic tropical curve that is C1-equivalent to $\Gamma_i$, and $v_i$ a fixed point for its hyperelliptic involution. Note that $\Gamma_i'$ exists by \Cref{prop:SHETC1equiv}, and fixed points exist by \Cref{lem:3flippedEdges}. If $g(G_i) = 1$ let $\Gamma_i'$ be the graph with a single vertex $v_i$ and a loop whose length is the sum of edge lengths of $\Gamma_i$. Finally, if $g(G_i) = 0$, then $\Gamma_i$ consists of a single vertex $v_i$ of weight $1$. In this case set $\Gamma_i' = \Gamma_i$. Then $\vee_{v_i} \Gamma_i'$ is a hyperelliptic tropical curve that is C1-equivalent to $\Gamma$.   
\end{proof}

If $e\in E(\Gamma)_{\ns}$, then $\Gamma^3\setminus \psi(e)$ may not be a 3-edge connectivization of $\Gamma\setminus e$. However, both of these tropical curves will have a common 3-edge connectivization, as we will see in the following Lemma.

\begin{lemma}
	\label{lem:3edgeConnectivizationEdgeRemoval}
	Given a tropical curve $\Gamma$ and $e \in E(\Gamma)_{\ns}$, $(\Gamma^3 \setminus \psi(e))^3$ is a 3-edge connectivization of $\Gamma\setminus e$. 
\end{lemma}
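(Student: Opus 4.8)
The plan is to show that $\Gamma^3 \setminus \psi(e)$ and $\Gamma \setminus e$ are C1-equivalent, and then invoke the fact (stated in the preliminaries) that any two 3-edge connectivizations of C1-equivalent tropical curves are 2-isomorphic, so that forming a further 3-edge connectivization $(\Gamma^3 \setminus \psi(e))^3$ yields a 3-edge connectivization of $\Gamma \setminus e$. The key observation is that the operation ``remove edge $e$'' and the operation ``contract within a C1-set'' interact in a controlled way, so the subtlety is entirely about how deleting $e$ changes the C1-structure of the graph relative to first passing to $\Gamma^3$.

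\textbf{First} I would reduce to the 2-edge connected case. Separating edges of $\Gamma$ are untouched by removing the nonseparating edge $e$, and the 2-edge connectivization commutes with the relevant operations, so it suffices to treat $\Gamma = \Gamma^2$. In this case $\psi(e) = e_{S_e}$ is the single edge of $\Gamma^3$ corresponding to the C1-set $S_e$ containing $e$.

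\textbf{Next} comes the heart of the argument: comparing the C1-sets of $\Gamma \setminus e$ with those of $\Gamma$. Let $S_e = \{e, f_1, \dots, f_k\}$ be the C1-set of $e$. I would argue that after deleting $e$, the remaining edges $f_1, \dots, f_k$ — which formed separating pairs with $e$ and with each other — now behave predictably: they remain a single C1-set (or become separating edges, depending on whether $|S_e|=2$), while every other C1-set $S \neq S_e$ of $\Gamma$ descends to a C1-set of $\Gamma \setminus e$ essentially unchanged, because deleting an edge of one C1-set cannot merge or split the separating-pair relation among edges lying outside $S_e$. The precise statement I want is that the map $\psi$ for $\Gamma$ restricts to a compatible correspondence between the C1-sets of $\Gamma \setminus e$ and the edges of $\Gamma^3 \setminus \psi(e)$. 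Concretely, contracting all but one representative in each C1-set of $\Gamma \setminus e$ should produce, up to 2-isomorphism, the same graph as contracting all but one representative in each C1-set of $\Gamma$ and then deleting $\psi(e)$ — the two contraction procedures differ only by the order in which we contract edges of $S_e \setminus \{e\}$ versus delete $e$, and contraction and deletion of distinct edges commute.

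\textbf{The main obstacle} I anticipate is the case where $e$ is the \emph{last surviving edge} of its C1-set under the chosen representatives, i.e.\ verifying that deletion of $e$ from $\Gamma$ corresponds on the level of $\Gamma^3$ to deletion of $\psi(e)$ together with the bookkeeping that the other edges of $S_e$, now freed in $\Gamma \setminus e$, get contracted away in passing to a 3-edge connectivization. I would handle this by choosing the representative $e_0$ of $S_e$ used in forming $\Gamma^3$ to be $e$ itself (any two 3-edge connectivizations are 2-isomorphic, so this is harmless), so that $\psi(e)$ literally is the uncontracted copy of $e$ in $\Gamma^3$; then $\Gamma^3 \setminus \psi(e)$ is obtained from $\Gamma \setminus e$ by contracting the same edges used to build $\Gamma^3$, none of which is $e$, and the result is immediate. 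Once this identification is set up, taking a further 3-edge connectivization on both sides and appealing to~\Cref{prop:connectivizationProperties} finishes the proof.
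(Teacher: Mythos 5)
Your overall plan---reduce to the 2-edge connected case, choose the representative of $S_e$ to be $e$ itself so that $\psi(e)$ is literally $e$ and deletion commutes with the connectivizing contractions, then conclude via C1-equivalence and \Cref{prop:connectivizationProperties}(1)---is workable and close in spirit to the paper's proof. But the claim at the heart of your second paragraph is false: it is not true that every C1-set $S \neq S_e$ of $\Gamma$ ``descends to a C1-set of $\Gamma\setminus e$ essentially unchanged,'' nor that deleting $e$ ``cannot merge or split the separating-pair relation among edges lying outside $S_e$.'' Deletion never splits C1-sets, but it can merge them. Take $\Gamma$ to be the theta graph, two vertices joined by edges $e_1,e_2,e_3$: there are no separating pairs, so each $\{e_i\}$ is its own C1-set and $\Gamma^3=\Gamma$; yet in $\Gamma\setminus e_1$ the pair $(e_2,e_3)$ is separating, so $\{e_2,e_3\}$ is a single C1-set. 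Consequently your ``concrete'' assertion---that contracting representatives of the C1-sets of $\Gamma\setminus e$ yields, up to 2-isomorphism, the same graph as $\Gamma^3\setminus\psi(e)$---also fails: in this example $(\Gamma\setminus e_1)^3$ is a loop while $\Gamma^3\setminus\psi(e_1)$ is a 2-cycle, and these are not 2-isomorphic. This is not a peripheral slip: the fact that $\Gamma^3\setminus\psi(e)$ may fail to be a 3-edge connectivization of $\Gamma\setminus e$ is stated in the paper immediately before the lemma, and it is the sole reason the statement carries the outer $(\cdot)^3$. (Your parenthetical about $S_e$ is also off: \emph{every} edge of $S_e\setminus\{e\}$ becomes a separating edge of $\Gamma\setminus e$, whatever $|S_e|$ is, since each such edge formed a separating pair with $e$.)

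What your argument actually needs, and what the paper proves, is the containment statement: each C1-set of $\Gamma$ not containing $e$ is \emph{contained in} (not equal to) a C1-set of $\Gamma\setminus e$, because separating pairs of $\Gamma$ persist after deleting $e$ while new ones may appear. That containment is what certifies the contractions in your representative trick as legal moves of type (C') on $(\Gamma\setminus e)^2$, giving $\Gamma\setminus e\sim_{C1}\Gamma^3\setminus\psi(e)$; the merged C1-sets are then handled precisely by the outer 3-edge connectivization, and \Cref{prop:connectivizationProperties}(1) finishes. The paper packages this as a map $\gamma:\Sets^1(\Gamma\setminus e)\to\Sets^1(\Gamma^3\setminus\psi(e))$ induced by $\psi$, writes each C1-set $S$ of $\Gamma\setminus e$ as a disjoint union of C1-sets $\psi^{-1}(g_i)$ of $\Gamma$, and observes that applying (C') to $S$ in one step equals applying it to each piece $\psi^{-1}(g_i)$ and then to $\gamma(S)$. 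So the route you chose can be completed, but only after replacing the false ``unchanged C1-sets'' claim by the correct merging picture; note also that what you flag as the main obstacle (which representative of $S_e$ survives) is in fact harmless, while the genuine difficulty is exactly the phenomenon your write-up defines away.
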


\begin{proof} 
First, note that $\psi:E(\Gamma)_{\ns} \to E(\Gamma^3)$ induces a map $\gamma : \Sets^1(\Gamma\setminus e) \to \Sets^1(\Gamma^3\setminus \psi(e))$ by $	\gamma(S) = \{ \psi(f) \, | \, f\in S \}$. Let $S$ be a C1-set of $\Gamma \setminus e$ and write
\begin{align*}
\gamma(S) = \{g_1,\ldots,g_{k}\}, && S = \{f_{ij} \, | \, 1\leq i \leq k, 1 \leq j \leq |\psi^{-1}(g_i)|\},
\end{align*}
where $\psi(f_{ij}) = g_i$. The Lemma now follows from the fact that applying move (C') to $(S,f_{11})$ is the same as applying (C') to each $(\psi^{-1}(g_i),f_{i1})$, then to $(\gamma(S), g_1)$.  
\end{proof}

Now we will show that the property of being hyperelliptic type is a minor closed condition on stable weighted graphs. A \textit{minor} of $\wG$ is a weighted graph $\wG'$ obtained by a sequence of lowering weights, removing edges, or performing weighted contractions. 

\begin{proposition}
	\label{prop:HETMinorClosed}
	Suppose $\wG$ is a weighted stable graph of hyperelliptic type, and $\wG'$ is a genus $g\geq 2$ connected minor. Then $\wG'$ is also of hyperelliptic type. 
\end{proposition}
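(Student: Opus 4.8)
The plan is to reduce to the three elementary minor operations and show each preserves hyperelliptic type, transporting the question to a hyperelliptic model via the Torelli characterization \Cref{prop:torelliHET}. First I would record the invariants of any sequence of operations producing $\wG'$: genus is non-increasing (deletion of a nonseparating edge and lowering a weight each drop it by one, contraction preserves it) and no move can reconnect a disconnected graph. Since the final genus is $\geq 2$ and $\wG'$ is connected, every intermediate weighted graph is connected of genus $\geq 2$; in particular every deleted edge is nonseparating at the moment of deletion. Thus by induction it suffices to treat the cases where $\wG'$ is obtained from a connected, stable, genus $\geq 2$ graph $\wG$ by (a) lowering one weight, (b) deleting a nonseparating edge, or (c) contracting an edge.

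Fix a length function (legitimate by \Cref{prop:HETLengthIndependent}) and, by \Cref{prop:torelliHET}, a hyperelliptic $\Gamma'$ with $\cyc{\Gamma^3}=\cyc{\Gamma'^3}$. I would replace $\Gamma'$ by $\Gamma'^3$: the hyperelliptic involution fixes separating edges pointwise and, by \Cref{prop:C1SetsHyperelliptic}, exchanges the two edges of each two-element C1-set, so it descends to $\Gamma'^3$, which is therefore hyperelliptic and $3$-edge connected. In a $3$-edge connected graph every C1-set is a singleton, so \Cref{prop:C1SetsHyperelliptic} forces every edge of $\Gamma'^3$ to be flipped; then the quotient tree has no edges and is a single point, so $\Gamma'^3$ has a single vertex orbit and is either one vertex carrying loops (a \emph{rose}) or two weight-$0$ vertices joined by parallel edges (a \emph{banana}). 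This explicit shape is what makes the model manipulable. The equality $\cyc{\Gamma^3}=\cyc{\Gamma'^3}$ matches edges; write $e''$ for the edge of $\Gamma'^3$ matched with $\psi(e)$.

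For deletion, \Cref{lem:3edgeConnectivizationEdgeRemoval} gives $(\Gamma\setminus e)^3=(\Gamma^3\setminus\psi(e))^3$, which transports across the $2$-isomorphism to $(\Gamma'^3\setminus e'')^3$; as $\Gamma'^3\setminus e''$ is again a (smaller) rose or banana it is hyperelliptic, so \Cref{prop:torelliHET} shows $\Gamma\setminus e$ is of hyperelliptic type. For contraction I split into cases: if $e$ is separating it is already contracted in $\Gamma^2$, so $(\Gamma/e)^3=\Gamma^3$ and the same model works; if $e$ shares a C1-set with another edge, then contracting $e$ is an instance of move (C), so $\Gamma/e\sim_{C1}\Gamma$ and \Cref{prop:connectivizationProperties}(1) gives $\cyc{(\Gamma/e)^3}=\cyc{\Gamma^3}$; and if $\{e\}$ is its own C1-set, contracting $e$ mirrors contracting $e''$ in $\Gamma'^3$, turning a banana into a rose or lowering a rose's loop count, which stays hyperelliptic. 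For lowering a weight, the weight persists in $\Gamma^3$ as a matroid loop, hence $\Gamma'^3$ has a matroid loop too; bananas have none, so $\Gamma'^3$ is a rose, and removing one matroid loop (by lowering its weight or deleting a loop) keeps it a rose, hence hyperelliptic.

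The main obstacle is the last contraction subcase, where $\{e\}$ is its own C1-set: here I need the contraction analogue of \Cref{lem:3edgeConnectivizationEdgeRemoval}, namely $\cyc{(\Gamma/e)^3}=\cyc{(\Gamma^3/\psi(e))^3}$, so that contracting $e$ upstairs is genuinely mirrored by contracting $e''$ in the banana/rose model. I expect this to follow by the same bookkeeping as the deletion lemma, since both $3$-edge connectivization and contraction are built from edge contractions and the cycle matroid is functorial under them; the delicate point is checking that contracting $e$ does not disturb the separating-pair structure governing the other C1-sets. A secondary point to verify carefully is that passing from $\Gamma'$ to $\Gamma'^3$ preserves hyperellipticity of the stable model, and that the matching $\psi(e)\leftrightarrow e''$ is compatible with deletion and contraction of the corresponding cycle-matroid element.
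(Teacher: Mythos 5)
Your argument hinges on replacing the hyperelliptic model $\Gamma'$ by its 3-edge connectivization $\Gamma'^3$ and claiming that the hyperelliptic involution descends, so that $\Gamma'^3$ is itself hyperelliptic and hence a rose or a banana. This step is wrong, and it is the load-bearing step of your whole proof. Move (C') contracts exactly one edge out of each two-element C1-set $\{e,f\}$ with $\tau(e)=f$, so the set of contracted edges is not $\tau$-stable and $\tau$ induces no automorphism of the quotient. Worse, the conclusion itself is false. Take $\Gamma'$ unweighted and stable with vertices $u_0,v,v',u_2$, loops $l_0$ at $u_0$ and $l_2$ at $u_2$, and edges $a_1=u_0v$, $a_2=u_0v'$, $b_1=vu_2$, $b_2=v'u_2$, $e=vv'$ (genus $4$). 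The involution fixing $u_0,u_2$, swapping $v\leftrightarrow v'$, flipping $l_0,l_2,e$, and exchanging $a_1\leftrightarrow a_2$, $b_1\leftrightarrow b_2$ has quotient a path, so $\Gamma'$ is hyperelliptic. Its C1-sets are $\{l_0\},\{l_2\},\{e\},\{a_1,a_2\},\{b_1,b_2\}$, and contracting $a_2,b_2$ produces $\Gamma'^3$: two vertices joined by the three parallel edges $a_1,b_1,e$, with the two loops $l_0,l_2$ attached to one of them. This is 3-edge connected but is neither a rose nor a banana, and it admits no hyperelliptic involution: any automorphism must fix both vertices (only one of them carries loops), hence can flip none of the three parallel edges, so at least two of them survive in the quotient, which therefore contains a cycle. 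Thus 3-edge connectivization preserves hyperelliptic \emph{type} but not hyperellipticity, and every subsequent case of your analysis (deletion keeping a rose/banana, contraction turning a banana into a rose, weight-lowering on a rose) manipulates a model that need not exist.

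For comparison, the paper never claims $\Gamma'^3$ is hyperelliptic. It first proves the proposition when $\wG$ is strongly of hyperelliptic type, working directly with the involution $\tau$ on $\Gamma'$ itself: if $e$ is not in a separating pair then $\tau$ flips it and both $\Gamma/e$ and $\Gamma\setminus e$ stay hyperelliptic, while if $e$ lies in a separating pair one equalizes lengths for the contraction and invokes \Cref{lem:HET2ConComponents} for the deletion. The general case is then reduced to the strong case by matching $e$ with an edge $e'$ of the hyperelliptic model through the 2-isomorphism of 3-edge connectivizations and applying \Cref{lem:3edgeConnectivizationEdgeRemoval}, rather than by normalizing the model into a canonical shape. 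Separately, the identity $\cyc{(\Gamma/e)^3}=\cyc{(\Gamma^3/\psi(e))^3}$ that you flag as ``expected'' is a genuine unproven obligation in your write-up; but even granting it, the argument still fails at the rose/banana claim above.
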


\begin{proof}
	First, consider the case where $\wG$ is strongly of hyperelliptic type, say $\Gamma = (\wG,\ell)$ is hyperelliptic and $\tau$ its involution. If $\Gamma'$ is obtained by deducting a weight by 1, then $\tau$ is still a hyperelliptic involution for $\Gamma'$.  If $e \in E(\wG)_{\ns}$ is not in a separating pair, then $\tau$ flips it by \Cref{prop:C1SetsHyperelliptic}. This means that $\Gamma/e$ and $\Gamma\setminus e$ remain hyperelliptic. 
	
	Now suppose $e$ is in a pair of separating edges $(e,f)$.  Define a new length function $\ell'$ that agrees with $\ell$ except $\ell'(e) = \ell'(f) = \ell(f)/2$. Then $(\wG,\ell')$ is a hyperelliptic model of $\Gamma/e$. Finally, consider $\Gamma' = \Gamma \setminus e$. Then $f$ is a separating edge of $\Gamma'$, say  $\Gamma_1$ and $\Gamma_2$ are the components of $\Gamma' \setminus f$. The map $\tau$ restricts an involution on both $\Gamma_1$ and $\Gamma_2$ such that $\Gamma_i/\tau$ is a tree. By~\Cref{lem:HET2ConComponents}, $\Gamma'$ is of hyperelliptic type. 
	
	For the general case, fix a length function $\ell$ for $\wG$ and let $\Gamma' = (\wG',\ell')$ is a hyperelliptic model for $\Gamma = (\wG,\ell)$. As 3-edge connectivization and 2-isomorphism are independent of the weights, deducting a weight from $\Gamma$ produces a tropical curve that is of hyperelliptic type (as long as the genus remains at least $2$).
	Fix a nonseparating edge $e$ of $\wG$. Suppose $\epsilon: E(\wG^3) \to E(\wG'^3)$ is a 2-isomorphism, and let  $\psi:E(\Gamma)_{\ns} \to E(\Gamma^3)$, $\psi':E(\Gamma')_{\ns} \to E(\Gamma'^3)$ be 3-edge connectivizations. Choose $e'\in E(\Gamma')_{\ns}$ such that $\psi(\epsilon(e)) = \psi'(e')$. Then 
	\begin{align*}
	\cyc{\Gamma^3\setminus \psi(e)} = \cyc{\Gamma'^3\setminus \psi'(e')} && \cyc{\Gamma^3 / \psi(e)} = \cyc{\Gamma'^3 / \psi'(e')}
	\end{align*}
	and therefore
	\begin{align*}
	\cyc{(\Gamma^3\setminus \psi(e))^3} = \cyc{(\Gamma'^3\setminus \psi'(e'))^3} && \cyc{(\Gamma^3 / \psi(e))^3} = \cyc{(\Gamma'^3 / \psi'(e'))^3}.
	\end{align*}
	By applying~\Cref{lem:3edgeConnectivizationEdgeRemoval} in the edge removal case, we see that
	\begin{align*}
	\cyc{(\Gamma\setminus e)^3} = \cyc{(\Gamma'\setminus e')^3} && \cyc{(\Gamma/e)^3} = \cyc{(\Gamma'/ e')^3}.
	\end{align*}
	The Proposition now follows from the strongly hyperelliptic type case. 
\end{proof}

To a stable weighted graph $\wG$, let $d(\wG)$ be
\begin{equation}
\label{eqn:dG}
d(\wG) = \sum_{v\in V(\wG)} \val(v) + 3w(v) - 3.
\end{equation}
Note that $d(\wG)$ is the dimension of the stratum of $\overline{\cM}_g$ of stable curves whose weighted dual graph is $\wG$. Note that $d(\wG) \geq 0$ with equality if and only if $w(v) = 0$ for all $v$ and $G$ is trivalent. For a tropical curve $\Gamma$, let $d(\Gamma) = d(\wG)$ where $(\wG,\ell)$ is a stable tropical curve tropically equivalent to $\Gamma$.  If $\wG'$ is obtained from $\wG$ by contracting an edge, then $d(\wG') = d(\wG) + 1$. In particular, if $d(\wG^3) = d(\wG)$ then $\wG$ is 3-edge connected.

\begin{proposition}
	\label{prop:K4L3NotHT}
	The graphs $K_4$ and $L_3$ are not of hyperelliptic type.
\end{proposition}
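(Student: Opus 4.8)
The plan is to run the tropical Torelli characterization of \Cref{prop:torelliHET} against the rigid structure of C1-sets of a hyperelliptic curve coming from \Cref{prop:C1SetsHyperelliptic}. The structural fact I want to isolate is this: \emph{for any hyperelliptic tropical curve $\Gamma'$, every circuit of the cycle matroid $\cyc{\Gamma'^3}$ has size at most $2$}; equivalently, up to $2$-isomorphism $\Gamma'^3$ is a wedge of banana graphs (two vertices joined by parallel edges) and bouquets of loops, so its only cycles are bigons and single loops. The computations $K_{2,3}^3 = B_3$ and the analogous collapses in small examples show this is exactly the expected output. Granting this fact, the proposition follows at once.

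Indeed, a direct inspection of \Cref{fig:k4l3} shows that $K_4$ and $L_3$ have no bridges and no separating pairs of edges (the minimum edge cut is $3$, resp. $4$), so all of their C1-sets are singletons and $K_4^3 = K_4$, $L_3^3 = L_3$; this is consistent with the remark preceding the statement, since $d(K_4)=0$ and $d(L_3)=3=d(L_3^3)$. Now suppose, say, that $K_4$ were of hyperelliptic type. By \Cref{prop:torelliHET} there is a hyperelliptic $\Gamma'$ with $\cyc{K_4}=\cyc{K_4^3}=\cyc{\Gamma'^3}$, and the same for $L_3$. But $K_4$ and $L_3$ each contain a triangle, that is, a circuit of size $3$ in the cycle matroid, and circuit sizes are invariant under $2$-isomorphism. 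This contradicts the structural fact, and the proposition is proved.

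So the real content — and the main obstacle — is the structural claim about $\Gamma'^3$. To prove it I would first reduce to $\Gamma'$ being $2$-edge connected, passing to a block (blocks of a hyperelliptic curve are again hyperelliptic, and $2$-isomorphism respects the block/matroid decomposition). By \Cref{prop:C1SetsHyperelliptic} every C1-set of $\Gamma'$ is either a single $\tau$-flipped edge or a $\tau$-exchanged pair, and in either case $\tau$ fixes the C1-set setwise; moreover $\tau$ acts as $-1$ on $H_1(\Gamma',\Z)$, since $H_1(\Gamma',\Q)^{\tau}=H_1(\Gamma'/\tau,\Q)=0$ as $\Gamma'/\tau$ is a tree. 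Forming $\Gamma'^3$ contracts exactly one edge of each exchanged pair, so $M(\Gamma'^3)=M(\Gamma')/X$ for the set $X$ of contracted edges. I would then take a simple cycle $C$ of $\Gamma'^3$ of length $m\geq 3$, lift it through this matroid contraction to a cycle $\widetilde{C}$ of $\Gamma'$ meeting at least three distinct C1-sets, and use $\tau(\widetilde{C})$ together with $\tau=-1$ on $H_1$ to force $\widetilde{C}$ to meet at most two C1-sets, a contradiction.

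The subtle point to watch, and where care is needed, is that $\Gamma'^3$ itself need not carry a hyperelliptic involution: contracting one edge of an exchanged pair destroys the $\tau$-symmetry whenever the two endpoints of the pair are not $\tau$-fixed, so one cannot simply transport $\tau$ to $\Gamma'^3$ and quote the involution there. The argument must instead be carried out at the level of the cycle matroid and of $H_1$, where the involution still acts. An alternative and possibly cleaner packaging is to show that each $2$-connected block of $\Gamma'^3$ has all C1-sets singletons, hence is $3$-edge connected, and then to prove directly that a $3$-edge connected block arising this way can only be a banana graph (cycle matroid $U_{1,k}$) or a single loop — which again yields the ``all circuits of size $\leq 2$'' conclusion that excludes the triangles in $K_4$ and $L_3$.
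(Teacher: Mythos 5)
Your reduction of the proposition to the structural claim is logically fine, but the structural claim itself --- that $\cyc{\Gamma'^3}$ has all circuits of size at most $2$ for every hyperelliptic $\Gamma'$ --- is false, and the proof collapses with it. Counterexample: let $\Gamma'$ be the trivalent genus-$3$ graph on vertices $a,b,m,c$ consisting of the $4$-cycle $a\,b\,m\,c$ with the edges $ab$ and $mc$ doubled (edge set $ab,ab',bm,mc,mc',ca$), with lengths satisfying $\ell(bm)=\ell(ca)$. The map $\tau$ with $a\leftrightarrow b$, $c\leftrightarrow m$, flipping $ab,ab',mc,mc'$ and swapping $bm\leftrightarrow ca$, is an involution whose quotient is a single edge, hence a tree; so $\Gamma'$ is hyperelliptic. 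Its unique separating pair is $(bm,ca)$, so its C1-sets are $\{bm,ca\}$ and four singletons, and contracting $ca$ shows that $\Gamma'^3$ is the triangle on $\{a,c\},b,m$ with two doubled edges: a $3$-edge connected graph containing the circuit $\{ab,bm,mc\}$ of size $3$. (Your claim would in fact contradict \Cref{thm:HTK4L3}: the triangle with two doubled edges has only $5$ edges, hence no $K_4$ or $L_3$ minor, hence is of hyperelliptic type, so by \Cref{prop:torelliHET} its cycle matroid equals $\cyc{\Gamma'^3}$ for some hyperelliptic $\Gamma'$ --- yet it has a $3$-circuit.) The precise point where your $H_1$ argument breaks is the final step: a lift $\widetilde C$ of a circuit of $\Gamma'^3$ may use \emph{both} edges of an exchanged pair, and then $\tau(\widetilde C)=-\widetilde C$ holds automatically, with no bound on how many C1-sets $\widetilde C$ meets. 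In the example, the lift of the triangle is the $4$-cycle through $ab,bm,mc,ca$: it meets three C1-sets, uses both edges of $\{bm,ca\}$, and is sent to its negative by $\tau$, exactly as $\tau=-1$ on $H_1$ demands. Your two supporting facts ($\tau=-1$ on $H_1$, and that $\Gamma'^3$ is a matroid contraction of $\Gamma'$) are both true; they are simply too weak, and your test cases ($K_{2,3}$, genus $2$) are too small to detect the failure, which first appears in genus $3$.

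For comparison, the paper avoids any general structure theory of hyperelliptic $3$-edge connectivizations and instead exploits the rigidity of the two specific graphs: a dimension count with $d(\wG)$ (for $K_4$), and the observation that any stable graph one contraction away from $L_3$ is already $3$-edge connected (for $L_3$), force any stable hyperelliptic model of $K_4$ or $L_3$ to be $2$-isomorphic, hence equal, to the graph itself; then \Cref{prop:C1SetsHyperelliptic}, applied to a graph with no separating pairs, would make the hyperelliptic involution flip every edge, which no automorphism of $K_4$ or $L_3$ does. If you wanted to salvage your approach, you would need a correct structural statement about which matroids arise as $\cyc{\Gamma'^3}$ for hyperelliptic $\Gamma'$; but that is essentially the content of the paper's main theorem, which this proposition is meant to help prove, so the circularity is hard to avoid.
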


\begin{proof}	
	First consider the $K_4$ case. Suppose $K_4$ is of hyperelliptic type and let $\Gamma = (\wG, \ell)$ be a stable hyperelliptic model for $K_4$. By the above comments, we see that
	\begin{equation*}
	0 \leq d(\Gamma) \leq d(\Gamma^3) = d(K_4) = 0,
	\end{equation*}
	so $\wG$ is already 3-edge connected. This means that $\cyc{\wG} = \cyc{K_4}$, and therefore $\wG = K_4$ since $K_4$ is 3-vertex connected. Because $K_4$ has no separating pairs of edges, the hyperelliptic involution of $\Gamma$ flips each edge of $K_4$ as in \Cref{prop:C1SetsHyperelliptic}.  This is a contradiction since no automorphism of $K_4$ satisfies this property. 
	
	Now suppose that $L_3$ is of hyperelliptic type, and that $\Gamma=(\wG,\ell)$ is a stable hyperelliptic model. Any stable graph differing from $L_3$ by a single edge contraction is 3-edge connected, so $\cyc{\wG} = \cyc{L_3}$.  Because $L_3$ has no cut vertices or separating pairs of vertices, $\cyc{L_3}$ consists of just $L_3$, so $\wG = L_3$.  As in the $K_4$ case, this means that the hyperelliptic involution of $\Gamma$ flips each edge of $L_3$, which is a contradiction.
\end{proof}

\noindent Observe that the ``only if'' direction of the theorem in the Introduction follows from~\Cref{prop:HETMinorClosed} and~\Cref{prop:K4L3NotHT}.

\section{Nested ear decompositions}
\label{sec:NestedEarDecompositions}
Nested ear decompositions were introduced in \cite{Eppstein} to study series-parallel graphs. Since these graphs are characterized by the absence of a $K_4$ minor, it is natural to explore the link between nested ear decompositions and tropical curves of hyperelliptic type. In this section, we will introduce the notion of a hyperelliptic type adapted (nested) ear decomposition, and show that any graph with no $K_4$ or $L_3$ minor admits such a decomposition. This will then be used to prove the ``if'' direction of the theorem in the Introduction. 

Let $G$ be a finite connected graph. An \textit{ear decomposition} of $G$ is  a collection of paths   $\ED = \{E_0, \ldots, E_g\}$ called \textit{ears} that partition $E(G)$ and satisfying the following properties.
\begin{enumerate}
	\item If two vertices in an ear are the same, then they must be the two endpoints of the same ear.
	\item The two endpoints of $E_k$ ($k\geq 1$) appear in $E_i$ and $E_j$ for $i,j < k$.
	\item No interior vertex of $E_j$ is in $E_i$ for $i\leq j$.
\end{enumerate}
An ear decomposition $\ED = \{E_0,\ldots, E_g\}$ is \textit{open} if the two endpoints of each ear are distinct. The ear $E_j$ is \textit{nested} in $E_i$ if $i<j$ and the endpoints of $E_j$ are in $E_i$. In this case, the \textit{nest interval} of $E_j$ in $E_i$ is the path $E_{ij}$ in $E_i$ between the endpoints of $E_j$. We write $E_{ij}^{\circ}$ for the interior of  $E_{ij}$. An ear decomposition $\ED$ is \textit{nested} if it is open and satisfies the following. 
\begin{enumerate}
	\item For $1\leq j \leq g$ there is some $i<j$ such that $E_j$ is nested in $E_i$.
	\item If $E_j$ and $E_k$ are both nested in $E_i$, then either $E_{ij}$ and $E_{ik}$ have disjoint interiors, or one is contained in the other.  
\end{enumerate}

Let $G$ be a graph with a nested ear decomposition $\ED$. The ear $E_j$ is \textit{properly nested} in $E_i$ if $E_j$ has an endpoint in the interior of $E_i$ and the other endpoint does not lie in the interior of any ear $E_k$ for $k>i$. If no such ear exists, i.e. the endpoints of $E_j$ coincide with those of $E_0$, then $E_j$ is called an \textit{initial ear}.    By \cite[Lemma~3]{Eppstein}, every non-initial ear is properly nested in exactly one other ear. If $E_j$ is properly nested in $E_i$, we write $E_i \lessdot E_j$.  Taking the reflexive and transitive closure of $\lessdot$ induces a partial order $\leq$ on $\ED$. 

\begin{proposition}
	\label{prop:K4NestedEarDecomposition}
	A 2-connected graph $G$ admits a nested ear decomposition $\ED$  if and only if $G$ has no $K_4$ minor.  
\end{proposition}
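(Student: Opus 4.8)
The plan is to show that a nested ear decomposition is precisely the ear-theoretic incarnation of a series-parallel structure, and then to invoke the classical theorem of Duffin that a $2$-connected graph is series-parallel if and only if it has no $K_4$ minor. Concretely, I would prove the two implications by induction on $|E(G)|$, reversing (respectively following) the series and parallel reductions. Throughout I would freely use that for an open ear decomposition every prefix $\{E_0,\dots,E_k\}$ spans a $2$-connected subgraph, and that subdividing an edge in the interior of an ear, or appending a single-edge ear with empty interior, preserves all the axioms defining a nested $\ED$.

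For the ``if'' direction (no $K_4$ minor $\Rightarrow$ nested $\ED$), I would use that a $2$-connected graph with no $K_4$ minor and more than one edge always admits a reduction: it has either a pair of parallel edges, or a vertex of degree $2$ with two distinct neighbours (this is Duffin's reducibility, equivalently $\mathrm{tw}(G)\le 2$). Deleting one edge of a parallel pair, or suppressing a degree-$2$ vertex, yields a smaller $2$-connected graph with no $K_4$ minor, which by induction carries a nested ear decomposition $\ED'$. I then reverse the reduction: a suppressed degree-$2$ vertex is reinserted by \emph{subdividing} the edge it became, which lies in the interior of a single ear and leaves endpoints, nesting, and the laminarity of nest intervals untouched; a deleted parallel edge $e'$ is reintroduced as a new single-edge final ear whose endpoints are those of its surviving partner $e$, declared nested in the ear $E_i$ containing $e$, with nest interval the single edge $e$. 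Since this nest interval has empty interior, the compatibility condition (2) is automatic, and the result is a nested ear decomposition of $G$.

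For the ``only if'' direction (nested $\ED$ $\Rightarrow$ no $K_4$ minor) I would argue by induction, peeling off the last ear $E_g$. Its interior vertices have degree $2$ (no later ear uses them, by axiom (3)), so $G' := G\setminus E_g$ inherits the nested ear decomposition $\{E_0,\dots,E_{g-1}\}$ and is again $2$-connected; by induction $G'$ is series-parallel. The ear $E_g$ is an open path with endpoints $x,y$ lying in its parent ear $E_i$, and its nest interval $E_{ig}\subseteq E_i$ is an $x$--$y$ path. The crux is that the portion $H$ of $G'$ lying ``over'' $E_{ig}$ --- the union of $E_{ig}$ with all ears recursively nested inside it --- is a $2$-terminal series-parallel network with terminals $x,y$, attached to the rest of $G'$ only at $x$ and $y$. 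Granting this, $G$ is obtained from $G'$ by adding $E_g$ in parallel with $H$, so $G$ remains series-parallel and hence, by Duffin, has no $K_4$ minor.

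The main obstacle is exactly the claim isolated above: that the subgraph $H$ over the nest interval is separated from the remainder of $G'$ by the two-vertex cut $\{x,y\}$ and is itself series-parallel with those terminals. This is where nestedness enters essentially --- the condition that nest intervals within a common ear are pairwise laminar (either nested or interior-disjoint) is precisely what forbids an ear from ``crossing'' $E_{ig}$ and thereby forces the $\{x,y\}$-separation. I would establish it by a secondary induction over the proper-nesting tree (a forest by \cite[Lemma~3]{Eppstein}), tracking at each ear the laminar family of its nest intervals and showing each maximal interval spans a self-contained $2$-terminal block. An alternative, if one is willing to quote more, is to bypass this bookkeeping entirely: the equivalence ``nested ear decomposition $\Leftrightarrow$ series-parallel'' is due to \cite{Eppstein}, and combining it with Duffin's $K_4$-minor characterization gives the proposition at once.
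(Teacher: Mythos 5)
The paper's proof of this proposition is exactly the two-citation argument you offer in your closing sentence: by \cite[Theorems~1,~3]{Duffin}, a 2-connected graph has no $K_4$ minor if and only if it is series-parallel, and by \cite[Theorem~1]{Eppstein}, a graph is series-parallel if and only if it admits a nested ear decomposition. So your proposal is correct, and in its fallback form it coincides with the paper's proof.

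The bulk of your write-up, however, is an attempt to \emph{re-prove} Eppstein's equivalence rather than quote it. That is a legitimate (if unnecessary) route, but as written it is incomplete: the entire difficulty of the ``only if'' direction is concentrated in the separation claim you isolate --- that the union $H$ of a nest interval $E_{ig}$ with all ears recursively nested over it meets the rest of $G'$ only in the two endpoints $x,y$ --- and you only sketch the secondary induction over the proper-nesting forest that would establish it. One concrete warning should you carry it out: the claim holds only if ``disjoint interiors'' of nest intervals is read edge-inclusively, i.e.\ $E_{ij}^{\circ}$ is the open path consisting of everything except the two endpoint \emph{vertices}, edges included. If one instead reads ``interior'' as interior vertices only, laminarity no longer prevents crossing intervals and the claim --- indeed the proposition itself --- fails: on $K_4$ with vertices $s,a,c,t$, take $E_0$ to be the path with vertex sequence $s,a,c,t$ and take single-edge ears $st$, $sc$, $at$ nested in $E_0$; the nest intervals of $sc$ and $at$ share the edge $ac$ yet have disjoint vertex-interiors $\{a\}$ and $\{c\}$, so this would qualify as a nested ear decomposition of $K_4$ under the weaker reading. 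Your ``if'' direction (reversing series and parallel reductions, then repairing the decomposition by subdivision or by appending a single-edge ear) is sound, modulo routine base cases for multigraphs.
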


\begin{proof}
	By \cite[Theorems~1,~3]{Duffin}, $G$ has no $K_4$ minor if and only if it is series-parallel. By \cite[Theorem~1]{Eppstein}, $G$ is series-parallel if and only if $G$ has a nested ear decomposition. 
\end{proof}

A \textit{hyperelliptic type adapted ear decomposition} (HTED) is a nested ear decomposition $\ED$ that satisfies the following additional property: if $E_{j}$ and $E_k$ are properly nested in $E_{i}$, then $E_{ij} \subset E_{ik}$ or $E_{ik} \subset E_{ij}$. A \textit{hyperelliptic adapted ear decomposition} (HED) is a HTED satisfying the following.
\begin{enumerate}
	\item  If $E_i \lessdot E_j$, then the endpoints of $E_j$ lie in the interior of $E_i$.	
	\item If $E_{j}$, $E_{k}$ are nested in $E_i$ and  $E_{ij} \subset E_{ik}$ then $E_{ij} = E_{ik}$ or the endpoints of $E_j$ lie in $E_{ik}^{\circ}$.
\end{enumerate}

\begin{lemma}
	\label{lem:existHTED}
	If $G$ is a 2-connected graph of genus $\geq 2$ that has no $K_4$ or $L_3$ minor, then $G$ has a HTED. 
\end{lemma}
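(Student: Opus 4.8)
The plan is to begin with a nested ear decomposition $\ED$ of $G$, which exists by \Cref{prop:K4NestedEarDecomposition} since $G$ is $2$-connected with no $K_4$ minor, and then to upgrade it to an HTED. It is convenient to first restate the HTED condition geometrically: in a nested ear decomposition the nest intervals of the ears \emph{properly} nested in a fixed ear $E_i$ form a \emph{laminar} family of subpaths of $E_i$ (any two are nested or have disjoint interiors, by axiom (2) of a nested decomposition), whereas being an HTED requires this family to be a \emph{chain} under inclusion. Thus the only obstruction to $\ED$ being an HTED is the existence of an ear $E_i$ with two proper children $E_j$, $E_k$ whose nest intervals $E_{ij}$, $E_{ik}$ have disjoint interiors.

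The heart of the argument is that such a disjoint pair can be removed by \emph{re-rooting} the ear decomposition, and that re-rooting is obstructed only by an $L_3$ minor. First I would record the structural consequence of the hypothesis: if some ear $E_i$ had three proper children with pairwise disjoint nest intervals $I_1,I_2,I_3$, then together with the ambient cycle through $E_i$ (which closes up through the lower ears $E_0,\dots$) these three ears produce an $L_3$ minor---contract each interval $I_s$ to a point, so that its ear becomes a loop, and contract the arcs of the ambient cycle between consecutive intervals, yielding three loops joined in a cycle. Hence under our hypotheses every ear has at most two inclusion-maximal proper children, so at most two pairwise-disjoint nest intervals. When exactly two disjoint maximal intervals $I_1=E_{ij}$ and $I_2=E_{ik}$ occur, I would re-root as in the model case of a cycle carrying two disjoint ears: designate the cycle $E_{ij}\cup E_j$ as the base of a new ear decomposition, let the rest of $E_i$ together with the path closing up $E_i$ through the lower ears form a single continuing ear $E'$, and nest $E_k$ into $E'$ (its interval $I_2$ now lies in $E'$). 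This turns the siblings $E_j,E_k$ from an antichain into a chain, and one checks it preserves openness and the nesting axioms.

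The step I expect to be the main obstacle is making the re-rooting globally well-defined and proving termination: a single move repairs one ear $E_i$ but could in principle introduce disjoint sibling pairs elsewhere, so one must exhibit a complexity measure---for instance a lexicographic measure built from the sizes of the nest intervals, or a count of antichains in the proper-nesting forest---that strictly decreases under each move. The bookkeeping is delicate because re-rooting alters which ear is properly nested in which, and one must verify that the endpoints of the re-routed ears still meet the proper-nesting condition (one endpoint interior to the parent, the other not interior to a later ear), possibly after subdividing. Once the measure is shown to decrease and the three-disjoint-children configuration is excluded by the absence of an $L_3$ minor, the process terminates at an ear decomposition in which the proper children of every ear form a chain, which is precisely an HTED.
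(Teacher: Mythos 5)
Your starting point (\Cref{prop:K4NestedEarDecomposition}), your identification of the obstruction as a pair of properly nested ears with disjoint nest intervals, and your idea of removing it by re-rooting are all aligned with the paper. But there is a genuine gap, and it sits exactly where you flagged it: the iteration-plus-termination scheme. You propose to repair one bad ear at a time and to control the process by a complexity measure, while admitting that the measure is not constructed and that a repair ``could in principle introduce disjoint sibling pairs elsewhere.'' That unresolved step is the actual content of the lemma, so the proposal as written does not constitute a proof. Moreover, your structural use of the $L_3$ hypothesis is too local: you only extract the statement that no ear has three proper children with pairwise disjoint nest intervals. That is true, but far weaker than what is needed, and it is not enough to make any obvious measure decrease, precisely because a re-rooting move changes the proper-nesting relation globally.

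The paper's proof shows that no iteration is necessary, because the $L_3$ exclusion acts globally, not just within a single ear. Suppose some ear $E_i$ has two proper children $E_j$, $E_k$ with disjoint nest intervals (chosen maximally). Then combining this pair with \emph{any} independent structure elsewhere --- a third initial ear, any ear other than $E_1$ nested in $E_0$, the case $E_i \neq E_1$, a second disjoint pair inside $E_{1j}$ or $E_{1k}$, or a disjoint pair in any other ear --- produces an $L_3$ minor. Consequently the entire decomposition is forced into a rigid form: exactly two initial ears $E_0$ and $E_1$, the only ear nested in $E_0$ is $E_1$, the unique violation occurs at $E_1$ and involves exactly the two blocks $E_{1j}$ and $E_{1k}$, and every other family of nest intervals is already a chain. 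Given this rigidity, a single re-rooting (absorbing the part of $E_1$ containing $E_{1k}$ into the base ear $E_0$, so that $E_j$ and $E_k$ become children of different ears) yields an HTED outright; there is nothing left to repair, hence no termination argument to supply. To fix your write-up you would either have to prove this global rigidity (at which point your iteration collapses to one step, recovering the paper's proof) or genuinely construct a decreasing measure compatible with re-rooting, which the paper's approach shows is unnecessary.
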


\begin{proof}	
	By~\Cref{prop:K4NestedEarDecomposition}, $G$ has a nested ear decomposition $\ED = \{E_0,\ldots,E_g\}$. By 2-connectedness, there are at least two initial ears $E_0$ and $E_1$. Label their endpoints by $s$ and $t$. Suppose $\ED$ is not hyperelliptic type adapted, i.e., there is an ear $E_i$ with at least 2 ears $E_j$ and $E_k$ properly nested in it whose nest intervals have disjoint interiors. Assume  these are chosen maximally in the sense  that if  $E_i \lessdot E_{\ell}$  then $E_{i\ell}$ does not contain exactly one of  $E_{ij}$ or $E_{ik}$.  Without loss of generality, suppose that $E_1 \leq E_i$.  We claim that $\ED$ consists of the following. 
	
	\begin{enumerate}
		\item It has exactly 2 initial ears $E_0$ and $E_1$.
		\item The only ear nested in $E_0$ is $E_1$. 
		\item The ears $E_j$ and $E_k$ are nested in $E_1$, and their nest intervals have disjoint interiors (take $E_j$ to be the ear closer to $s$ along $E_1$). 
		\item If $E_m$ is nested in $E_1$, then $E_{1m}$ is contained in either $E_{1j}$ or $E_{1k}$. 
		\item If $E_{m}$ and $E_n$ are nested in $E_1$ with $E_{1m}$ and $E_{1n}$  contained in $E_{1j}$ (resp. $E_{1k}$), then $E_{1m}$ contains $E_{1n}$ or vice versa. 
		\item For every $E_{\ell} \neq E_{0}, E_{1}$, if $E_{m}$ and $E_{n}$ are nested in $E_{\ell}$, then $E_{\ell m}$ contains $E_{\ell n}$ or vice versa. 
	\end{enumerate} 
If $E_{\ell} \neq E_1$  is nested in $E_0$ (possibly an initial ear), then any connected subgraph that contains $E_0$, $E_i$, $E_j$, $E_k$,  and $E_{\ell}$ has a $L_3$ minor. Therefore the only ear nested in $E_0$ is $E_1$. This proves (1) and (2).  If $E_{i} \neq E_1$, then any connected subgraph containing $E_0$, $E_1$, $E_i$, $E_j$, and $E_k$ has a $L_3$ minor. Therefore $E_i = E_1$, demonstrating (3).
	
Suppose $E_{m}$ is nested in $E_{1}$. If $E_{1m}$ contains $E_{1j}$ and $E_{1k}$  then $E_{0} \cup E_{1} \cup E_j \cup E_k \cup E_m$ has a $L_3$ minor. The same is true if $E_{1m}^{\circ}$ is disjoint from $E_{1j}^{\circ}$ and $E_{1k}^{\circ}$. Together with the maximality assumption on $E_j$ and $E_k$, this proves (4).  	Now suppose $E_m$ and $E_n$ are nested in $E_{1}$ with $E_{1m}, E_{1n} \subset E_{1j}$. If $E_{1m}^{\circ}$ and $E_{1n}^{\circ}$ are disjoint, then $E_{0} \cup E_{1} \cup E_{k} \cup E_{m} \cup E_{n}$ contains a $L_3$ minor. Similarly, if $E_{1m}, E_{1n} \subset E_{1k}$ then they cannot have disjoint interiors, proving (5). For (6), suppose $E_{\ell} \neq E_{0}, E_1$ and that $E_m$, $E_n$ are nested in it. If $E_{\ell m}^{\circ}$ and $E_{\ell n}^{\circ}$ are disjoint, then any connected subgraph containing  $E_{0}$, $E_{1}$, $E_{\ell}$, $E_{m}$, $E_n$ has a $L_3$ minor.  Thus $\ED$ has the required form. 
	
Define a new ear decomposition $\ED'$ in the following way. Write $t'$ for the endpoint of $E_k$ which is closer to $s$ along $E_1$. Let $E_0'$ be the path from $s$ to $t$ along $E_0$, followed by the path from $t$ to $t'$ along $E_1$. The next ear $E_1'$ will be the path from $s$ to $t'$ along $E_1$.  The remaining ears are left unchanged, i.e., $E_i'=E_i$ for $i\geq 2$. Then $\ED'$ is a HTED.  For an illustration of this modification, see~\Cref{fig:ModifyToHTED}. 
\end{proof}

\begin{figure}[htb]
	\begin{center}
		\includegraphics[width=0.9\textwidth]{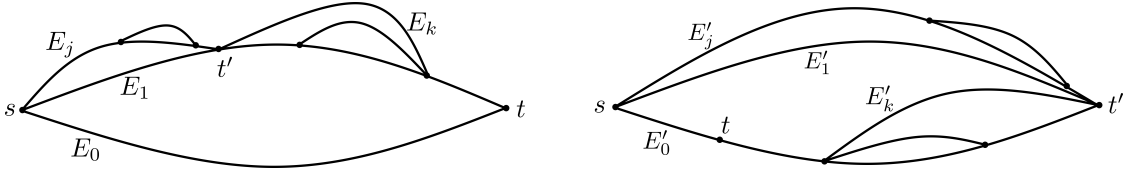}
	\end{center}
	\caption{Modification of a nested ear decomposition to a HTED on a graph with no $L_3$ minor. \label{fig:ModifyToHTED}}
\end{figure}

\begin{lemma}
	\label{lem:HTEDtoHED}
	Suppose $G$ is a 2-connected stable graph of genus $g\geq 2$ that has a HTED.
	\begin{enumerate}
		\item The graph $G$ has a HTED with at least $3$ initial ears.
		\item If $G$ is trivalent, then any HTED with 3 initial ears is a HED.
		\item There is a $G'$ that has a HED such that $G\sim_{C1} G'$ and $G$ is a specialization of $G'$.  
		
	\end{enumerate}
\end{lemma}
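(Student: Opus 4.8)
The plan is to prove the three parts in order, using (1) and (2) to build the resolution in (3). For (1), if the given HTED already has three or more initial ears there is nothing to do, so I would assume it has exactly two, $E_0$ and $E_1$, with common endpoints $s,t$. Since $g\geq 2$ there is a non-initial ear; by the nesting axioms the one of smallest index, $E_2$, has both endpoints on a single $E_0$ or $E_1$, say on $E_1$, at points $p,q$. I would re-root the decomposition at $p,q$ by setting $F_0 = E_{12}$ (the nest interval, i.e.\ the arc of $E_1$ between $p$ and $q$), $F_1 = E_2$, and $F_2 = E_1[s,p]\cup E_0\cup E_1[q,t]$; these are three ears sharing the endpoints $p,q$, hence three initial ears, and the remaining ears re-nest in the obvious way. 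The real work is to check that this is again a HTED: one must rule out two ears properly nested in a common ear of the new decomposition whose nest intervals have disjoint interiors. This is exactly the type of configuration excluded by the absence of an $L_3$ minor in the proof of \Cref{lem:existHTED}, so I would argue that such a pair, together with the base cycle $F_0\cup F_1$, would produce an $L_3$ minor; taking $E_2$ with nest interval maximal among the first-level ears keeps this bookkeeping manageable.

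For (2), assume $G$ is trivalent with a HTED whose three initial ears $E_0,E_1,E_2$ share endpoints $s,t$. Then $s$ and $t$ are each an endpoint of three ears, so by trivalency they have degree exactly $3$ and no other ear can meet them. I would verify the two HED axioms by a degree count carried out by induction down the order $\leq$. For HED(1), let $E_i\lessdot E_j$; proper nesting puts one endpoint of $E_j$ in the interior of $E_i$, and I must place the other endpoint $b$ there too. If $E_i$ is initial then $b\neq s,t$ because those vertices are saturated, so $b$ is interior. If $E_i$ is non-initial, then by induction its endpoints already lie in the interior of its parent; an interior endpoint of $E_i$ has degree $2+1=3$, so trivalency forbids any ear from ending there, forcing $b$ into the interior of $E_i$. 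For HED(2), if $E_j,E_k$ are nested in $E_i$ with $E_{ij}\subset E_{ik}$ and they shared an endpoint $x$, then $x$ would be interior to $E_i$ and an endpoint of both $E_j$ and $E_k$, i.e.\ of degree $2+1+1=4$; hence either $E_{ij}=E_{ik}$ or the endpoints of $E_j$ lie in $E_{ik}^\circ$.

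For (3), the plan is to ``resolve'' $G$ into a trivalent graph along its ear structure and then invoke (1) and (2). Starting from a HTED of $G$, at each vertex of valence $\geq 4$ witnessing a failure of HED(1) or HED(2) I would split the vertex, inserting a new edge $\epsilon$ and distributing the incident half-edges so that the offending attachment point moves to an interior position. The key is to choose $\epsilon$ so that it completes a separating pair with an edge already crossing the vertex; then $\epsilon$ joins an existing C1-set, the $3$-edge-connectivization is unchanged, and $G\sim_{C1}G'$, while contracting $\epsilon$ returns $G$, so $G$ is a specialization of $G'$. Iterating produces a $G'$ that is trivalent except possibly at ``theta-bundle'' vertices, all of whose incident ears are initial and which therefore never enter the HED axioms; applying (1) to get three initial ears and the degree argument of (2) to the trivalent part then shows $G'$ has a HED.

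The hard part throughout is the simultaneous bookkeeping in (3): the inserted edges must lie in C1-sets (to keep $G\sim_{C1}G'$), must contract back to $G$ (specialization), and must yield a genuine HED. The most delicate of these is the first, since a vertex split is a C1-move precisely when the new edge extends a separating pair, and I expect the main obstacle to be showing that at every HED-obstruction such a split is available---equivalently, that the only unavoidable high-valence vertices are the harmless theta-bundles. This is where the forbidden-minor structure underlying \Cref{lem:existHTED} must be leveraged, to exclude rigid $3$-edge-connected configurations that admit no C1-equivalent trivalent resolution.
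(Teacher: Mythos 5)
Your part (2) is essentially the paper's own argument (degree counting at saturated vertices), and your re-rooting construction in part (1) can in fact be made to work. But there are two genuine problems. First, in both (1) and (3) you appeal to the absence of an $L_3$ minor (``the forbidden-minor structure underlying \Cref{lem:existHTED}''). That is not a hypothesis of this lemma: it assumes only that $G$ is a $2$-connected stable graph of genus $\geq 2$ admitting a HTED. The implication ``has a HTED $\Rightarrow$ no $L_3$ minor'' is only available as a consequence of the main theorem, which is proved \emph{using} this lemma, so importing it here is circular. For part (1) this is repairable without any minor argument: once $E_2$ is chosen with maximal nest interval, the HTED axiom for the old decomposition (ears properly nested in a common ear have totally ordered nest intervals) forces every non-initial ear nested in $E_1$ to have its endpoints inside $E_{12}=F_0$, so after re-rooting the ears properly nested in $F_0$ and those properly nested in $F_2$ again form chains, and no ear nested in $E_0$ ever shares a parent with one nested in $E_{12}$. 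That direct bookkeeping (or the paper's alternative: use stability to find an ear attached at the common endpoint $s$ and re-split there) is what the verification must rest on, not excluded minors.

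Second, and decisively, part (3) is not proved. You correctly isolate the crux --- at each HED-violation one must perform a split whose new edge completes a separating pair with an existing edge, so that contracting it back is a move of type (C) --- and then you explicitly defer it (``I expect the main obstacle to be showing that at every HED-obstruction such a split is available''). That deferred claim \emph{is} the content of part (3), and it is exactly what the paper supplies by pure HTED combinatorics: induct on the invariant $d(G)$ of \Cref{eqn:dG}; at a violation of type (a) ($E_i \lessdot E_j$ with an endpoint of $E_j$ equal to an endpoint $a$ of $E_i$), choose $E_j$ with $E_{ij}$ maximal, subdivide the edge of $E_{ij}$ adjacent to $a$ to create a vertex $b$, move to $b$ precisely the ears $\ED_a$ ending at $a$ that satisfy $E_j \leq E_\ell$ or $E_{i\ell} \subset E_{ij}$, and verify that the new edge $f=ab$ together with the edge $e$ of $E_i\setminus E_{ij}$ is a separating pair --- the maximality of $E_{ij}$ and the definition of $\ED_a$ are what make this true; type (b) is symmetric. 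Your proposal specifies neither which ears to move nor why the separating pair exists, and your fallback (forbidden minors) is unavailable, as above. Note also that the paper never needs to make $G'$ trivalent, so your ``theta-bundle'' analysis is unnecessary: removing HED-violations one at a time, with induction on $d$, is a strictly weaker and easier goal than full trivalence.
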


\begin{proof}
	Suppose $\ED = \{E_0, \ldots, E_g\}$ is a HTED for $G$ that only has two initial ears $E_0$ and $E_1$. Let $s,t$ be the endpoints of these ears. By stability, there is an ear $E_{j}$ ($i\geq 2$) that has $s$ as an endpoint, assume that it is nested in $E_0$.   Choose $E_j$ maximally in the sense that if $E_0 \lessdot E_k$ then $E_{0j}$ contains $E_{0k}$, and write $t'$ for the other endpoint of $E_j$  Define a new ear decomposition $\ED'$ as follows. Let $E_0'$ be the path from $s$ to $t'$ along $E_0$ and $E_1'$ the path from $s$ to $t$ along $E_1$, followed by the path from $t$ to $t'$ along $E_0$. Finally, let $E_i'=E_i$ for $i\geq 2$. Then $\ED'$ is a HTED such that $E_0', E_1', E_j'$ are initial ears.  This proves (1).
	
	Now suppose $G$ is trivalent and $\ED$ is a HTED with at least 3 initial ears. We claim that $\ED$ is already a HED. Suppose $E_j \lessdot E_k$. If $E_j$ is initial, then both endpoints of $E_k$ lie in the interior of $E_j$ since $E_k$ is not initial.  Now suppose $E_j$ is not initial, and that $a,b$ are the endpoints of $E_j$. Then $a$ lies in the interior of some ear $E_i$. Since $G$ is trivalent, $E_k$ cannot have $a$ as an endpoint. A similar argument shows that $b$ is not an endpoint of $E_k$.  Therefore the endpoints of $E_k$ lie in the interior of $E_j$, and so condition (1) of a HED is satisfied. 
	
	Next, assume that $E_{j}$ and $E_{k}$ are nested in $E_i$ with $E_{ij} \subset E_{ik}$. If $E_{k}$ is initial, then either $E_{j}$ is initial (in which case $E_{i}$, $E_{j}$, and $E_k$ are the initial ears) or the endpoints of $E_j$ lie in $E_{ik}^{\circ}$. Otherwise, the endpoints of $E_k$ lie in the interior of $E_{i}$. Similar to the case in the previous paragraph, the endpoints of $E_j$ lie in $E_{ik}^{\circ}$. 	 This verifies condition (2) of a HED and completes part (2) of this Lemma. 
	
	To prove (3), we proceed by induction on $d(G)$ defined in~\Cref{eqn:dG}. When $d(G) = 0$, $G$ is trivalent and $\ED$ is already a HED. Now suppose that the Lemma is true for stable graphs with $d<\delta$, and let $G$ be a stable graph with $d(G) = \delta$. If $\ED$ is not a HED, then there are ears satisfying at least one of the following:
		
		(a)   $E_i\lessdot E_j$, but one endpoint of $E_j$ coincides with an endpoint $a$ of $E_i$, or 
		
		 (b) $E_{ij} \subset E_{ik}$ but exactly one endpoint $b$ of $E_j$ lies in the interior of $E_{ik}$. 
		 
	\noindent Consider case (a). Assume $E_{j}$ is chosen so that if $E_i \lessdot E_{k}$ then $E_{ik} \subset E_{ij}$.  Let $G'$ be the graph obtained from $G$ in the following way. First subdivide the edge in $E_{ij}$ adjacent to $a$, creating a new vertex $b$. Let $\ED_a \subset \ED$ be the ears $E_{\ell}$ that have $a$ as an endpoint, and either $E_j\leq E_{\ell}$ or $E_{i\ell} \subset E_{ij}$. For every ear in $\ED_a$, move the corresponding endpoint from $a$ to $b$. See the left side of \Cref{fig:ModifyToHED} for an illustration. Let $e$ be the unique edge in $E_i \setminus E_{ij}$ and $f$ the edge between $a$ and $b$.  Then $(e,f)$ form a separating pair of edges for $G'$, and contracting $f$ yields $G$. Therefore $G'\sim_{C1}G$ and $d(G') = \delta - 1$. By the inductive hypothesis, there is a $G''$ that has a HED such that $G'' \sim_{C1} G'$ and $G$ is a specialization of $G''$. Case (b) is handled in a similar fashion, see the right side of \Cref{fig:ModifyToHED}.
\end{proof}

\begin{figure}[htb]
	\begin{center}
		\includegraphics[width=0.4\textwidth]{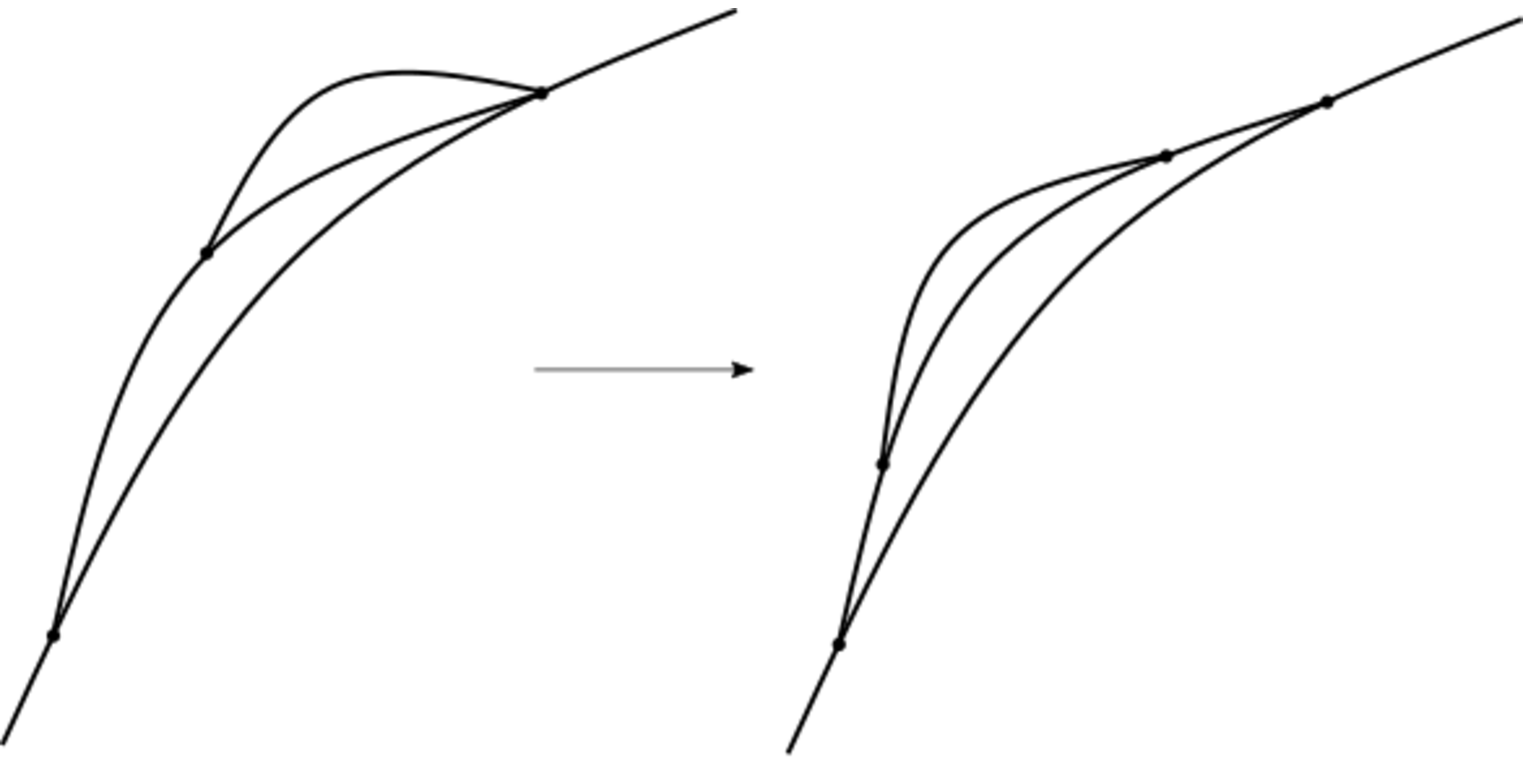} $\;\;\;\;\;\;\;$
		\includegraphics[width=0.4\textwidth]{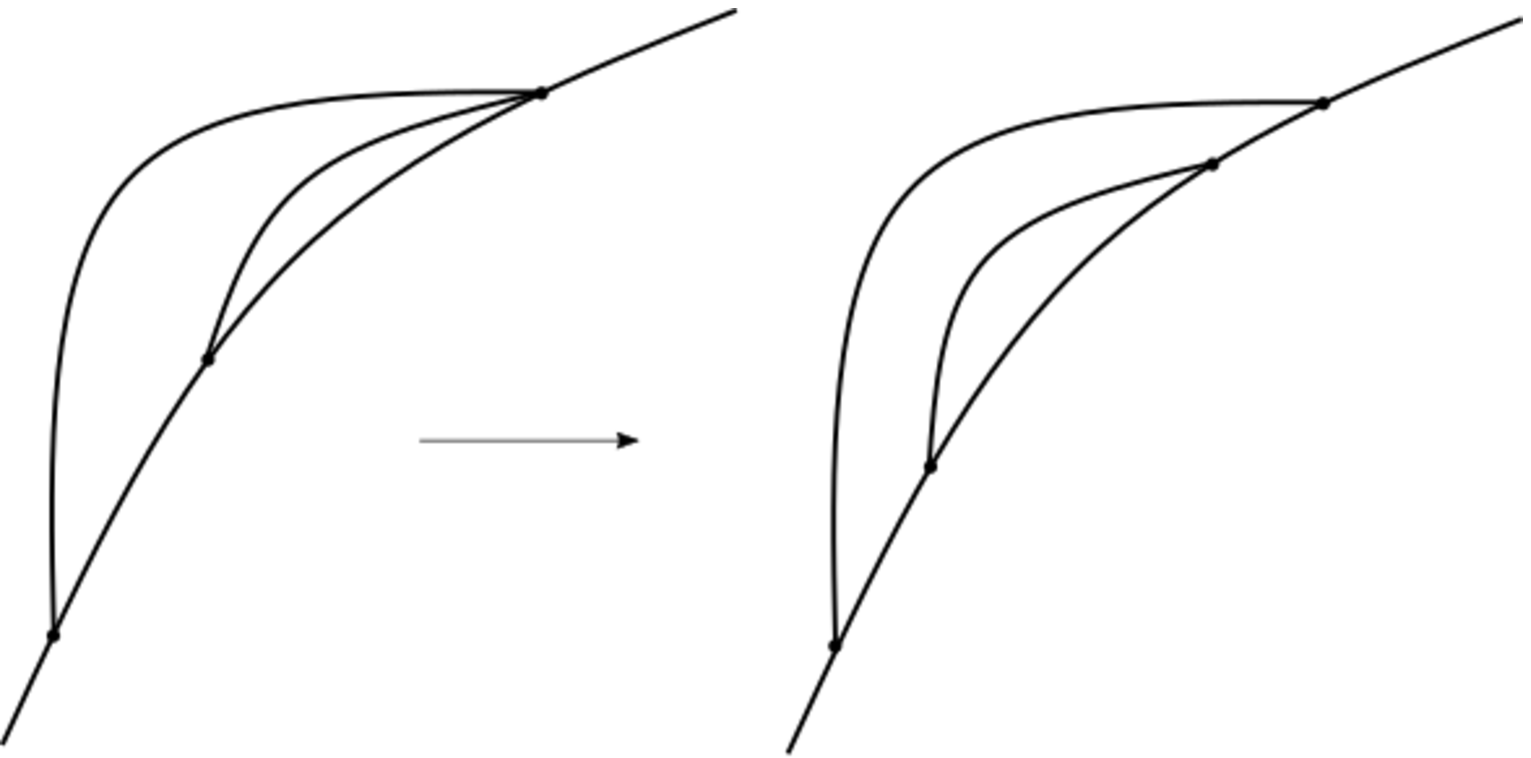}
	\end{center}
	\caption{Modifications in the inductive step of the proof of \Cref{lem:HTEDtoHED}(3). \label{fig:ModifyToHED}}
\end{figure}

\begin{lemma}
	\label{lem:HEDtoSHET}
	Suppose $G$ is a 2-connected stable graph of genus $g\geq 2$ that has a HED. Then $G$ is strongly hyperelliptic type.  
\end{lemma}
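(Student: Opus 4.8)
The plan is to produce a single combinatorial involution $\tau$ of the graph $G$ for which the underlying graph of $G/\tau$ is a tree. Granting this, note that every length function constant on the $\tau$-orbits of $E(G)$ — for instance $\ell \equiv 1$ — is preserved by $\tau$; since $G$ is $2$-connected we have $w\equiv 0$, so the requirement that positive-weight vertices be fixed is vacuous, and the stable tropical curve $(G,\ell)$ is its own stable model. Hence $(G,\ell)$ is hyperelliptic, which is exactly what it means for $G$ to be strongly of hyperelliptic type: it suffices to exhibit \emph{one} length function making $(G,\ell)$ hyperelliptic (as $\tau$ generally swaps pairs of non-flipped edges, not every length function is $\tau$-invariant, so one should not expect every $\ell$ to work). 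Thus the whole problem reduces to constructing $\tau$ from the given HED $\ED = \{E_0,\dots,E_g\}$.

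The involution I would build is the reflection that reverses every ear onto itself. Let $s,t$ be the common endpoints of the initial ears; set $\tau(s)=t$, and on each ear $E_i$, regarded as a path, let $\tau$ be the order-reversing map, which fixes the midpoint of $E_i$ (either a vertex or the midpoint of a flipped edge). For this to be a graph automorphism the ears nested in $E_i$ must sit symmetrically about that midpoint. The key structural claim is therefore that \emph{a HED is recursively palindromic}: for each ear $E_i$ the vertices of $E_i$, read off as the endpoints $p_1,\dots,p_r,q_r,\dots,q_1$ of its nested intervals $I_k=[p_k,q_k]$, occur in a symmetric (palindromic) order with consecutive endpoints adjacent, and the same holds recursively inside each nested ear. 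Granting this, reversal of $E_i$ matches $p_k\leftrightarrow q_k$, hence swaps the two endpoints of each nested ear; reversal of that ear is compatible in turn, so $\tau$ is a well-defined involution.

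Establishing the recursively palindromic structure is the crux, and this is where the HED hypotheses enter. I would argue as follows: by the HTED property the nest intervals of the ears properly nested in $E_i$ are pairwise comparable, so they form a chain $I_1\supseteq I_2\supseteq\cdots$; HED condition~(2) upgrades each strict inclusion to strict nesting of endpoints in the open interior $E_{ij}^{\circ}$ (forbidding an inner interval from sharing an endpoint with an outer one), while HED condition~(1) forces both endpoints of every nested ear into the interior of $E_i$. Stability then forbids valence-$2$ interior vertices, so every interior vertex of $E_i$ is an endpoint of some nested interval; combined with comparability and condition~(2) this pins the endpoints down as the symmetric sequence above. The delicate bookkeeping — several ears sharing a single interval, the innermost interval being a bigon (so its midpoint is an edge-midpoint), and checking that consecutive endpoints are genuinely adjacent — is what I expect to take the most care. (Alternatively, \Cref{lem:HTEDtoHED}(3) lets one first replace $G$ by a trivalent $G'$ having a HED that specializes to $G$; in the trivalent case each interior vertex carries exactly one nested endpoint and the palindromic structure is most transparent, and one then transfers back to $G$ through the separating-pair contractions used in the proof of \Cref{prop:HETMinorClosed}.)

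Finally I would verify that $G/\tau$ is a tree using the ear cycles. The classes $\gamma_1,\dots,\gamma_g$, where $\gamma_j$ is the ear $E_j$ closed up through its nest interval, form a basis of $H_1(G;\mathbb{Q})$. Because $\tau$ reverses both $E_j$ and its (symmetric) nest interval, $\tau(\gamma_j)=-\gamma_j$ for every $j$, so $\tau$ acts as $-\mathrm{id}$ on $H_1(G;\mathbb{Q})$. The rational homology of the quotient is the $\tau$-invariant part, $H_1(G/\tau;\mathbb{Q})\cong H_1(G;\mathbb{Q})^{\tau}=0$; collapsing the flipped edges, as in the definition of the tropical quotient, only removes leaves and leaves $b_1$ unchanged, so the underlying graph of $\Gamma/\tau$ is connected with $b_1=0$, i.e.\ a tree. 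This confirms that $(G,\ell)$ is hyperelliptic, and therefore that $G$ is strongly of hyperelliptic type.
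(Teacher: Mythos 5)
Your proposal is correct and is essentially the paper's own argument: the paper constructs exactly this ear-reversing involution, realizing your ``recursively palindromic'' structure through its explicit labeling of the chain of nest intervals $E_{ij_1} \subsetneq \cdots \subsetneq E_{ij_k}$ with endpoints $v_a, w_a$ and edges $e_{i,a} = v_av_{a+1}$, $f_{i,a} = w_{a+1}w_a$ (relying on the HED conditions and stability at the same level of detail as your sketch, including that the innermost interval $E_{ij_1}$ is a single flipped edge), and then takes $\ell$ constant on $\tau$-orbits. The only divergence is the final tree verification: where you compute $H_1(G/\tau;\Q) \cong H_1(G;\Q)^{\tau} = 0$ from $\tau_* = -\mathrm{id}$ on the ear-cycle basis, the paper observes more elementarily that every non-flipped edge orbit $\{e_{i,a}, f_{i,a}\}$ is a separating pair, so every edge of $G/\tau$ is separating and the quotient is a tree.
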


\begin{proof}
	Fix a HED $\ED = \{E_0,\ldots,E_g\}$ for $G$. We define $\tau$ ear by ear as follows.  Let $v, w$ be the endpoints for $E_i$ and set $\tau(v) = w$. If $E_i$ has no ears properly nested in it, then $E_i$ has exactly one edge. Let $\tau$ flip this edge. Otherwise, choose a maximal collection of ears $E_{j_1},\ldots, E_{j_k}$ properly nested in $E_i$ so that  $E_{ij_1} \subsetneq \cdots \subsetneq E_{ij_k} \subsetneq E_i$. 
	Now, $E_i\setminus E_{ij_1}$ separates the endpoints of each $E_{ij_a}$. Let $v_a$ be the endpoint of  $E_{ij_a}$ appearing on the side of $E_i \setminus E_{ij_1}$ that contains $v$, and $w_a$ the other endpoint. Let $e_{i,a} = v_av_{a+1}$, $e_{i,k} = v_kv$, $f_{i,a} = w_{a+1}w_a$, and $f_{i,k} = ww_k$. Define $\tau(e_{i,a}) = f_{i,a}$ and $\tau(f_{i,a}) = e_{i,a}$.  Finally, $E_{ij_1}$ consists of just one edge, so let $\tau$ flip it. This defines an involution $\tau$. Let $\pi: G \to G/\tau$ denote the quotient map.
	
	We claim that $G/\tau$ is a tree. Suppose $e$ is an edge of $G/\tau$. Then $\pi^{-1}(e)$ is an edge of $G$ that is not flipped. This means that $\pi^{-1}(e) = \{e_{i,a}, f_{i,a}\}$ for some $i$ and $a$, which is a separating pair of edges. So removal of $e$ from $G/\tau$ disconnects it. Since ever edge of $G/\tau$ is separating, it is a tree. Choosing $\ell$ so that $\ell(e_{i,a}) = \ell(f_{i,a})$ produces a hyperelliptic tropical curve $(G,\ell)$. 
\end{proof}

\begin{theorem}
	\label{thm:HTK4L3}
	Let $\Gamma = (\wG,\ell)$ be a tropical curve. The following are equivalent.
	\begin{enumerate}
		\item The tropical curve $\Gamma$ is of hyperelliptic type. 
		\item The underlying graph $G$ has no $ K_4 $ or $L_3$ minor. 
		\item There is a hyperelliptic $\Gamma'$ such that $\Gamma \sim_{C1} \Gamma'$.
	\end{enumerate}
	If in addition $\Gamma$ is 2-connected and unweighted, then the following is equivalent to the previous.
	\begin{enumerate}[resume]
		\item There is a hyperelliptic $\Gamma'$ with $\Gamma \sim_{C1} \Gamma'$ and $\Gamma'$ specializes to $\Gamma$.
	\end{enumerate}
\end{theorem}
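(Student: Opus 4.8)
The plan is to prove the main equivalence through the cycle $(3)\Rightarrow(1)\Rightarrow(2)\Rightarrow(3)$ and then to treat $(4)$ under the extra hypotheses. First, $(3)\Rightarrow(1)$ is immediate: if $\Gamma\sim_{C1}\Gamma'$ with $\Gamma'$ hyperelliptic, then \Cref{prop:connectivizationProperties}(1) gives $\cyc{\Gamma^3}=\cyc{\Gamma'^3}$, so $\Gamma$ is of hyperelliptic type by \Cref{prop:torelliHET}. The implication $(1)\Rightarrow(2)$ is the ``only if'' direction already recorded after \Cref{prop:K4L3NotHT}: hyperelliptic type is minor closed (\Cref{prop:HETMinorClosed}) and neither $K_4$ nor $L_3$ is of hyperelliptic type (\Cref{prop:K4L3NotHT}), so a hyperelliptic type graph admits neither as a minor.

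The substantive implication is $(2)\Rightarrow(3)$, which I would reduce to the $2$-connected case. By \Cref{lem:HET2ConComponents} it suffices to show that each block of $\Gamma^2$ of genus $\geq 2$ is $C1$-equivalent to a strongly hyperelliptic type tropical curve. Such a block is $2$-connected and, being a minor of $G$, has no $K_4$ or $L_3$ minor; contracting one edge of each pair at a valence-$2$ vertex is an instance of move (C'), so up to $C1$-equivalence the block may be taken stable. Now \Cref{lem:existHTED} furnishes a HTED, \Cref{lem:HTEDtoHED}(3) a $C1$-equivalent graph carrying a HED, and \Cref{lem:HEDtoSHET} shows that this graph is strongly of hyperelliptic type. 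Feeding these back through \Cref{lem:HET2ConComponents} yields a hyperelliptic curve $C1$-equivalent to $\Gamma$, which is $(3)$.

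Finally I turn to $(4)$, where $\Gamma=(\wG,\ell)$ is $2$-connected and unweighted. Here $(4)\Rightarrow(3)$ is trivial. For $(2)\Rightarrow(4)$, \Cref{lem:existHTED}, \Cref{lem:HTEDtoHED}(3) and \Cref{lem:HEDtoSHET} give, at the level of the underlying graph, an HED-graph $G'$ that is strongly of hyperelliptic type, with $G\sim_{C1}G'$ and $G$ a specialization of $G'$; let $\tau$ be the explicit involution of \Cref{lem:HEDtoSHET}, whose $C1$-sets are the flipped edges and the exchanged pairs $\{e_{i,a},f_{i,a}\}$. The plan is to lift $\ell$ to a hyperelliptic length function on a suitable subdivision of $G'$: flipped edges, and edges whose $\tau$-partner is contracted in the specialization, inherit their lengths from $\ell$ without constraint, while an exchanged pair that survives to $G$ with unequal $\ell$-lengths is accommodated by subdividing and inserting contractible partner edges, exactly the separating-pair phenomenon of \Cref{fig:sameJac}.

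The hard part—and the step I expect to be the main obstacle—is producing a \emph{single} $\Gamma'$ that is simultaneously hyperelliptic, $C1$-equivalent to $\Gamma$, and a specialization onto $\Gamma$. Hyperellipticity forces the two edges of each exchanged pair to a common length (\Cref{prop:C1SetsHyperelliptic}), $C1$-equivalence pins down the total length of each $C1$-set, and a specialization can only delete surviving length; these three requirements must therefore be balanced against one another pair by pair. This is precisely the tension separating strongly hyperelliptic type from hyperelliptic type, and arranging it for an arbitrary metric $\ell$ on the exchanged pairs is where the real work of $(4)$ lies—for a trivalent example such as two bananas joined in series one already has $G'=G$, so the requisite flexibility must be introduced through the subdivision rather than found in the HED of \Cref{lem:HTEDtoHED}(3).
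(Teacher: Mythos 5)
Your handling of the equivalence of (1), (2), (3) is correct and follows essentially the same route as the paper: (3)$\Rightarrow$(1) via \Cref{prop:connectivizationProperties}(1) and \Cref{prop:torelliHET}, (1)$\Rightarrow$(2) via \Cref{prop:HETMinorClosed} and \Cref{prop:K4L3NotHT}, and (2)$\Rightarrow$(3) by reducing to blocks with \Cref{lem:HET2ConComponents} and then running \Cref{lem:existHTED}, \Cref{lem:HTEDtoHED}(3), and \Cref{lem:HEDtoSHET}; your observation that 2-valent vertices can be suppressed by move (C') so that the stability hypotheses of those lemmas are met is a legitimate (and worthwhile) bit of bookkeeping.

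The genuine gap is (2)$\Rightarrow$(4), which you explicitly leave open as ``the real work.'' The tension you describe is not actually present: it comes from reading ``$\Gamma'$ specializes to $\Gamma$'' as a length-compatible degeneration, in which surviving edges must keep their $\ell$-lengths. The paper defines specialization only for weighted graphs---a sequence of weighted edge contractions---so condition (4) constrains the underlying weighted graph of $\Gamma'$ and nothing more. With that reading, the implication closes in one step using tools you already invoked: \Cref{lem:HTEDtoHED}(3) produces $\wG'$ with a HED such that $\wG \sim_{C1} \wG'$ and $\wG$ is a specialization of $\wG'$; \Cref{prop:connectivizationProperties}(2) transports $\ell$ to some $\ell'$ with $(\wG,\ell) \sim_{C1} (\wG',\ell')$; and the averaging construction in \Cref{prop:SHETC1equiv} changes only the lengths, not the graph, yielding a hyperelliptic $\Gamma' = (\wG',\ell'')$ with $\Gamma \sim_{C1} \Gamma'$ whose underlying graph is still $\wG'$, hence still specializes to $\wG$. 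There is no pair-by-pair balancing: C1-equivalence already permits arbitrary redistribution of length within each C1-set, and specialization imposes no metric condition. Note, moreover, that under your stronger reading statement (4) would be \emph{false}, so it cannot be the intended one: take two bananas joined into a cycle by two single edges $e,f$ with $\ell(e) \neq \ell(f)$. This graph is trivalent, 2-connected, unweighted, and of hyperelliptic type, but any hyperelliptic $\Gamma''$ whose graph contracts onto it still has $\{e,f\}$ as a separating pair, so \Cref{prop:C1SetsHyperelliptic} forces the hyperelliptic involution of $\Gamma''$ to exchange $e$ and $f$ and hence forces $\ell(e) = \ell(f)$; the subdivision trick you propose cannot rescue this, since subdividing $e$ and $f$ into exchanged pairs of equal lengths again forces $\ell(e) = \ell(f)$ in total.
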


\begin{proof}
	The implication (1) $\Rightarrow$ (2) follows from~\Cref{prop:HETMinorClosed} and~\Cref{prop:K4L3NotHT}.
	
	Now suppose $\Gamma$ is 2-connected and has no $K_4$ or $L_3$ minor. By~\Cref{lem:existHTED}, $G$ has a HTED.  By~\Cref{lem:HTEDtoHED}, there is a $G'$ that has a HED such that $G\sim_{C1}G'$ and $G'$ specializes to $G$. Moreover, $G'$ is strongly of hyperelliptic type by \Cref{lem:HEDtoSHET}.   By~\Cref{prop:connectivizationProperties}(2) and \Cref{prop:SHETC1equiv}, $\ell'$ may be chosen so that  $\Gamma \sim_{C1} \Gamma'$. This shows (2) $\Rightarrow$ (4), and (2) $\Rightarrow$ (3) now follows from \Cref{lem:HET2ConComponents}. 
	
	Finally, (3) $\Rightarrow$ (1) and (4) $\Rightarrow$ (1) follow from \Cref{prop:connectivizationProperties}(1) and \Cref{prop:torelliHET}. 
\end{proof}

\bibliographystyle{amsalpha}
\bibliography{HETbib}

\end{document}